\def \phi {\varphi}
\def \R {\mathbb{R}}
\def \vf{\varphi}
\def \So {\mathscr{S}(\Rm)}
\newcommand{\Rn}{\mathbb R^n}
\newcommand{\Rm}{\mathbb R^m}
\newcommand{\p}{\partial}
\newcommand{\la}{\lambda}
\numberwithin{equation}{section}
\newcommand{\beq}{\begin{equation}}
\newcommand{\bea}[1]{\begin{array}{#1} }
\newcommand{\eeq}{ \end{equation}}
\newcommand{\ea}{ \end{array}}
\newcommand{\I}{\mathscr I_{HL}}
\newcommand{\sa}{\langle}
\newcommand{\da}{\rangle}
\newtheorem{theorem}{Theorem}[section]
\newtheorem{lemma}[theorem]{Lemma}
\newtheorem{proposition}[theorem]{Proposition}
\newtheorem{corollary}[theorem]{Corollary}
\newtheorem{remark}[theorem]{Remark}
\numberwithin{equation}{section}
\begin{document}

\title[Dispersive equations with invariant measures]{Dispersive equations with invariant measures}

\dedicatory{I dedicate this paper to Carlos Kenig,  deeply influential mathematician, on the occasion of his birthday}

\keywords{Ornstein-Uhlenbeck operator. Schr\"odinger equation. Invariant measures. Dispersive estimates}

\subjclass{35Q40, 35Q41, 35C15, 22E30}

\date{}

\begin{abstract}
In mathematical physics it is of interest to study Schr\"odinger equations with friction and possessing an invariant measure. The focus of this paper is the Cauchy problem for the Schr\"odinger equation $\p_t f - i \mathscr L f = 0$, where  $\mathscr L = \Delta - \sa x,\nabla\da$ is the Ornstein-Uhlenbeck operator. We use this as a model to stimulate interest in a new class of possibly degenerate dispersive equations which cannot be treated by the existing theory.   
\end{abstract}

\author{Nicola Garofalo}

\address{School of Mathematical and Statistical Sciences\\ Arizona State University\\ Tempe, AZ 85287-1804}
\vskip 0.2in
\email{Nicola.Garofalo@asu.edu}

\thanks{The author was supported in part by a Progetto SID (Investimento Strategico di Dipartimento): ``Aspects of nonlocal operators via fine properties of heat kernels", University of Padova (2022); and by a PRIN (Progetto di Ricerca di Rilevante Interesse Nazionale) (2022): ``Variational and analytical aspects of geometric PDEs". He was also partially supported by a Visiting Professorship at the Arizona State University}

\maketitle


\section{Introduction}\label{S:intro}

The Cauchy problem for the free Schr\"odinger equation
\begin{equation}\label{S}
\p_t f - i \Delta f = 0, \ \ \ \ \ \ \ f(x,0) = \vf(x),
\end{equation}
occupies a central position in quantum mechanics, and it represents a fundamental model for a class of equations known as dispersive. The name comes from the following estimate fulfilled by a solution $f(x,t)$ of \eqref{S}: for any $1\le p\le 2$, one has for some universal $C(m,p)>0$,  
\begin{equation}\label{class}
||f(\cdot,t)||_{L^{p'}(\Rm)} \le \frac{C(m,p)\ }{t^{m(\frac 12 - \frac 1{p'})}} ||\vf||_{L^p(\Rm)}.
\end{equation}
As in the theorem of Hausdorff-Young for the Fourier transform, \eqref{class} fails for $p>2$, see e.g. \cite[Lemma 1.2]{GV} or \cite[Proposition 2.2.3]{Caze}. One important application of \eqref{class} is the $T^\star T$ approach of Ginibre \& Velo to prove mixed norms inequalities, known as Strichartz estimates, for the nonlinear Schr\"odinger equation, see \cite{GVstrich} or \cite[Section 2.3]{Caze}.

In mathematical physics it is of interest to study Schr\"odinger equations with friction, but with a principal part that can be possibly degenerate. For instance, the Cauchy problem in $\R^2\times(0,\infty)$
\begin{equation}\label{sk}
 \p_t f = i\left(\p_{xx} f  - 2(x+y) \p_x f + x \p_y f\right),\ \ \ \ \ f(x,y,0) = \vf(x,y),
\end{equation}
arises from the Smoluchowski-Kramers' approximation of Brownian motion with drift, see \cite{Bri} and \cite{Fre}. One would like to know whether for \eqref{sk} there exist dispersive estimates such as \eqref{class}, local or global. However, since the diffusive term $\p_{yy}u$ is missing in \eqref{sk}, such equation cannot be treated by ad hoc modifications of the existing theory for \eqref{S}, and one has to develop new tools.
Equations such as \eqref{sk} fall into a class of open problems whose investigation is the background motivation of the present work. 

To describe the questions we have in mind, given $m\in \mathbb N$, consider  the second order partial differential operator in $\Rm$ defined by 
\begin{equation}\label{L}
\mathscr L f = \operatorname{tr}(Q \nabla^2 f) +  \langle B x,\nabla f\rangle.
\end{equation}
In \eqref{L} we have denoted by $Q$ and $B$ two $m\times m$ matrices with real constant coefficients. We assume that $Q = Q^\star \ge 0$, but this matrix can a priori be degenerate. With $\mathscr L$ as in \eqref{L}, in the opening of his fundamental work \cite{Ho}, H\"ormander considered the  Cauchy problem  in $\Rm\times (0,\infty)$
\begin{equation}\label{cpH}
\p_t f -  \mathscr L f = 0,\ \ \ \ \ \ \ \ \ f(x,0) = \vf(x),
\end{equation}
and proved that the PDE in \eqref{cpH} is hypoelliptic if and only if, for at least one $t>0$, the following condition holds for the covariance matrix generated by $Q$ and $B$,
\begin{equation}\label{Ds}
Q(t) = \int_0^t e^{sB}Q e^{sB^\star} ds > 0.
\end{equation}
He did so by showing that, under the assumption \eqref{Ds}, the problem \eqref{cpH} admits the ``heat kernel" 
\begin{equation}\label{q}
G(x,y,t) = \frac{(4\pi)^{-m/2}}{\sqrt{\det Q(t)}} \exp\left( -
\frac{\sqrt{\langle Q(t)^{-1}(y-e^{tB} x),y-e^{tB}
x \rangle}}{4}\right),
\end{equation}
which is easily checked to be $C^\infty$ outside the diagonal. It is worth noting here that the condition \eqref{Ds} identified by H\"ormander is also necessary and sufficient for the controllability of a first-order system, see \cite{Z}.

The present work is motivated by the following broad question: to study the Cauchy problem 
\begin{equation}\label{cpHi}
\p_t f -  i \mathscr L f = 0,\ \ \ \ \ \ \ \ \ f(x,0) = \vf(x),
\end{equation}
when the H\"ormander's condition \eqref{Ds} is satisfied. This is a problem which, at the moment, is unchartered territory: here, we present some initial progress by focusing the attention on the specific model \eqref{cp0} below. 

To put our results in the proper perspective, we note that the task of understanding \eqref{cpHi} is somewhat ambiguous since the class \eqref{L} encompasses equations with a very diverse physical background. To explain this point we note that the model problem \eqref{sk} presents two distinctive features:
\begin{itemize}
\item[(i)] an underlying complex Lie group invariance; 
\item[(ii)] the existence of an invariant measure.
\end{itemize}
While (i) is shared by the general Schr\"odinger equation \eqref{cpHi}, property (ii) depends on the drift matrix $B$ and it is not always true. 
Concerning (i), we mention that if one defines the left-multiplication operator
\[
\sigma_{(x,s)} (y,t) = (x,s) \circ (y,t) = (y + e^{-tB}x,t+s),
\]
then it is not difficult to verify the following commutation property for the PDE in \eqref{cpH}   
\[
(\p_t  -  \mathscr L)(\sigma_{(x,s)} f) = \sigma_{(x,s)}[(\p_t  -  \mathscr L) f].
\]
This invariance, which is reflected in the expression \eqref{q}, was first observed in the paper \cite{LP}. The problem \eqref{cpHi} displays a related, yet very different property. If in fact one considers the complex non-commutative group law 
\begin{equation}\label{complexLie}
\tau_{(x,s)} (y,t) = (x,s)\circ (y,t) = (y + e^{-itB}x,t+s),
\end{equation}
then it is not difficult to verify that 
\[
(\p_t  - i \mathscr L)(\tau_{(x,s)} f) = \tau_{(x,s)}[(\p_t  - i \mathscr L) f].
\]
The Lie group action \eqref{complexLie} links the analysis of \eqref{cpHi} to some interesting unexplored aspects of conformal geometry. In such broader perspective,  a motivation for \eqref{cpHi} is connected to the study of the Schr\"odinger equation $\p_t u - i \mathscr P u = 0$, where $\mathscr P$ is an ``hybrid" operator as defined in \cite{GT}. Yet another motivation is provided by the study of the Schr\"odinger equation $\p_t u - i \Delta_H u = 0$, where $\Delta_H$ indicates a sub-Laplacian in a group of Heisenberg type. In this direction, important contributions concerning mixed restriction inequalities, or dispersive estimates, were given in the works  \cite{Mu}, \cite{BaPaXu}, \cite{Hi}, \cite{BaFeGa}, \cite{BaBaGa} and \cite{BaGa}. Strichartz estimates in complex semisimple Lie groups were obtained in \cite{Cha}, whereas various forms of uncertainty principles were established in the works \cite{SST}, \cite{Cha}, \cite{Vel} and \cite{LM}.

Concerning (ii), we recall  that a probability measure $\mu$ in $\Rm$ is called invariant with respect to $\mathscr L$ in \eqref{L} if for every $u\in C^2_b(\Rm)$ one has
\[
\int_{\Rm} \mathscr L u\ d\mu = 0.
\]
Now, it is known that an invariant measure exists if and only if all the eigenvalues of the drift matrix $B$ have strictly negative real part, i.e.,  
 the spectrum $\sigma(B)$ satisfies the assumption
\begin{equation}\label{invmeas}
\max\{\Re(\lambda)\mid \lambda\in \sigma(B)\}\ <\ 0.
\end{equation}  
It is also known that \eqref{invmeas} is necessary and sufficient for the following integral
\begin{equation}\label{measure}
Q_\infty \overset{def}{=}\int_0^\infty e^{sB} Q e^{s B^\star} ds
\end{equation}
to converge to a well-defined positive matrix. 
We observe here that, from the definition of the covariance matrix $Q(t)$ in \eqref{Ds}, it is easy to check the following monotonicity property  
\begin{equation}\label{mono}
Q(t+s) = Q(t) + e^{tB} Q(s) e^{tB^\star},\ \ \ \ \ \ \ s, t >0.
\end{equation}
Note that \eqref{mono} implies that \eqref{Ds} holds for one $t>0$ if and only if it does so for every $t>0$, and that furthermore the function $t\to Q(t)$ is strictly increasing in the sense of quadratic forms, so that it always makes sense to consider the formal limit $\underset{t\nearrow \infty}{\lim} Q(t) = Q_\infty$. For these results one should see the monographs \cite[Section 6]{DZ} and \cite{Bo}. 

For instance, for the Smoluchowski-Kramers' model \eqref{sk} one has
\begin{equation}\label{skeigenv}
Q = \begin{pmatrix} 1 & 0\\ 0& 0\end{pmatrix},\ \ \ \ \  B = \begin{pmatrix} -2 & -2\\ 1 & 0\end{pmatrix}.
\end{equation} 
Since the eigenvalues of $B$ are $\la = -1-i$ and $\bar \la = -1+i$, we infer that \eqref{invmeas} is verified, and we refer to \cite[Sec. 5]{FL} for the computation of the relevant invariant measure. Furthermore, H\"ormander's condition \eqref{Ds} holds, so that the equation  in \eqref{cpH} is hypoelliptic. One has in fact for any $t>0$
\[
\operatorname{det} Q(t) = \frac{e^{-2t}}{16} \left(\cosh(2t) + \cos(2t) - 2\right) > 0,
\]
see \cite[Sec. 3.1]{GTmathann}.  However, as we have mentioned above, invariant measures do not always exist. Consider, in fact, the equation of Kolmogorov from Brownian motion and the kinetic theory of gases
\begin{equation}\label{Kol}
\p_t u- \Delta_v u - \sa v,\nabla_x u\da = 0,
\end{equation}
first introduced in \cite{Kol} (see also the discussion in \cite{Strex} and the book \cite[Sec. 7.4]{Str}). In \eqref{Kol} we have indicated the spatial variables $v, x\in \Rn$, so that, with $m = 2n$, the PDE acts on functions defined in $\Rm\times (0,\infty)$. Despite the degenerate character of \eqref{Kol} (note the missing diffusive term $\Delta_x u$), the equation is hypoelliptic. This had already been proved by Kolmogorov himself in \cite{Kol} since he did write the following explicit fundamental solution (actually, he only discussed the case $n=1$, in which case $m=2$)
\[
G(x,y,\bar x,\bar y,t) = \frac{3^{\frac n2}}{(2\pi)^n} t^{-2n} \exp\left\{-\frac{1}{4t}\left(|x-\bar x|^2 + 12 |\frac{y-\bar y}t + \frac{x+\bar x}{2}|^2\right)\right\}.
\]
To see the hypoellipticity using H\"ormander's theorem, observe that one has for \eqref{Kol} 
\begin{equation}\label{QBkolmo}
Q = \begin{pmatrix} I_n & O_n\\ O_n & O_n\end{pmatrix},\ \ \ \ \ \  B = \begin{pmatrix} O_n & O_n\\I_n & O_n\end{pmatrix}.
\end{equation}
Since $B^k = O_m$ for every $k\ge 2$, a simple computation gives 
\begin{equation}\label{QK}
Q(t)= \int_0^t e^{sB}Q e^{sB^\star} ds = \begin{pmatrix} tI_n & \frac{t^2}2 I_n\\ \frac{t^2}2 I_n & \frac{t^3}3 I_n\end{pmatrix},
\end{equation}
and therefore the assumption \eqref{Ds} holds, since one has for $t>0$
\begin{equation}\label{detkol}
\det Q(t) = c(n) t^{4n} = c(m) t^{2m}>0.
\end{equation}
From  \eqref{QBkolmo} it is clear that the condition \eqref{invmeas} fails (and in fact \eqref{QK} shows that $Q_\infty$ is not defined), we thus conclude that \eqref{Kol} does not admit an invariant measure. 

This discussion should convince the reader that the Cauchy problem \eqref{cpHi} requires a different approach, depending on whether \eqref{invmeas} is valid or not.  We mention that, when \eqref{invmeas} is satisfied, and therefore $Q_\infty$ in \eqref{measure} is well-defined, the invariant measure for $\mathscr L$ is given by
\begin{equation}\label{dgamma}
d\gamma_\infty(x) = \frac{(4\pi)^{-\frac m2}}{(\det Q_\infty)^{1/2}} e^{-\frac{\sa Q_\infty^{-1}x,x\da}4} dx,
\end{equation}
see e.g. \cite[Proposition 2.3]{BG&liar}. Since in such case the operator $\mathscr L$ is self-adjoint in $L^2(\Rm,d\gamma_\infty)$, from Stone's theorem (see e.g. \cite[Theorem 1, p. 345]{Yo}) we conclude that, associated with \eqref{cpHi}, there exists a strongly-continuous group $e^{i t \mathscr L}$ of unitary operators in $L^2(\Rm,d\gamma_\infty)$. As a first objective, one would like to obtain for such group a dispersive estimate similar to the classical one in \eqref{class}.

 This paper provides an initial contribution to this program by considering the model case $Q = I_m$ and $B = - I_m$ in \eqref{L}, so that 
\[
\mathscr L f= \Delta f - \sa x,\nabla f\da,
\] 
and \eqref{cpHi} presently becomes  
\begin{equation}\label{cp0}
\p_t f - i \left(\Delta f - \sa x,\nabla f\da\right) = 0, \ \ \ \ \ \ \ f(x,0) = \vf(x).
\end{equation}
Note that for this problem the assumptions \eqref{Ds} and \eqref{invmeas} are obviously satisfied, and in fact
a simple calculation shows that we presently have
\[
Q(t) = \frac{1- e^{-2t}}{2}\ I_m\ \nearrow\ Q_\infty = \frac 12 I_m. 
\]
One thus obtain from \eqref{dgamma}  the invariant measure
\[
d\gamma(x) = d\gamma_\infty(x) = (2\pi)^{-\frac m2} e^{-\frac{|x|^2}2} dx,
\]
i.e., the standard normalised Gaussian measure. 
The parabolic counterpart of the PDE in \eqref{cp0} is the well-known Ornstein-Uhlenbeck  equation
introduced in \cite{OU}, and further developed in \cite{WU}. Such equation  plays a significant role in probability, especially in connection with the calculus of Malliavin, see \cite[Chap. 2]{SS}. For a historical account up to the year 2018 the reader is also referred to the survey \cite{Bo2}.

Our first result concerning \eqref{cp0} is Proposition \ref{P:OUim}, which provides a representation formula for its solution. In order to state it, we introduce a basic functional class which is reminiscent of the Bargmann representation in a Fock space, see \cite[Chap. 1 \& 5]{Fo}. In what follows, given a function $\vf$, we indicate with $\psi$ the function
\begin{equation}\label{psi}
\psi(x) = e^{- \frac{|x|^2}{4}} \vf(x),\ \ \ \ \ \ x\in \Rm.
\end{equation} 
Notice from \eqref{psi} that 
\[
\psi\in L^2(\Rm)\ \Longleftrightarrow\ \vf\in L^2(\Rm,d\gamma),
\]
and that
\begin{equation}\label{phipsi}
||\psi||_{L^2(\Rm)} = ||\vf||_{L^2(\Rm,d\gamma)}.
\end{equation}
We denote by 
\begin{equation}\label{K}
\mathscr K(\Rm) = \{\vf\in C^\infty(\Rm)\mid \psi \in \mathscr S(\Rm)\}.
\end{equation}
It should be clear to the reader that $\mathscr K(\Rm)$ is dense in $L^2(\Rm,d\gamma)$. Throughout this paper we let $J^+ = (0,\pi)$, $J^- = (\pi,2\pi)$, and denote 
\[
J = J^+\cup J^-. 
\]

\begin{proposition}\label{P:OUim}
Let $\vf\in \mathscr K(\Rm)$. For every $x\in \Rm$ the function  
\begin{equation}\label{OUim}
f(x,t) = \begin{cases}
\frac{(4\pi
)^{-\frac{m}{2}} e^{\frac{imt}2}}{e^{\frac{i\pi m}{4}} (\sin t)^{\frac m2}} \int_{\Rm} e^{i \frac{|e^{it/2} y-e^{-it/2} x|^2}{4 \sin t}} \vf(y) dy,\ \ \ \ \ \ \ t\in J^+,
\\
\\
\frac{(4\pi
)^{-\frac{m}{2}} e^{\frac{imt}2}}{e^{\frac{i 3\pi m}{4}} |\sin t|^{\frac m2}} \int_{\Rm} e^{-i \frac{|e^{it/2} y-e^{-it/2} x|^2}{4 |\sin t|}} \vf(y) dy,\ \ \ \ \ \ \ t\in J^-,
\end{cases}
\end{equation} 
and $f(x,0) = \vf(x)$, solves \eqref{cp0} in $\Rm\times J$.
\end{proposition}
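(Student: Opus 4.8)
The plan is to strip off the Gaussian weight by conjugating with the ground state $e^{-|x|^2/4}$, which turns \eqref{cp0} into a Schr\"odinger equation for the harmonic oscillator, whose propagator is the classical Mehler kernel. Writing $f = e^{|x|^2/4} g$ and expanding the derivatives, one finds
\[
\mathscr L f = e^{\frac{|x|^2}{4}}\Big(\Delta g - \tfrac{|x|^2}{4} g + \tfrac m2 g\Big) = -\,e^{\frac{|x|^2}{4}}\Big(H - \tfrac m2\Big)g,\qquad H:=-\Delta+\tfrac{|x|^2}{4}.
\]
Hence $f$ solves $\p_t f = i\mathscr L f$ if and only if $g$ solves $\p_t g = -i\big(H-\tfrac m2\big)g$, with transformed datum $g(\cdot,0)=e^{-|x|^2/4}\vf=\psi$. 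By \eqref{K} the hypothesis $\vf\in\K(\Rm)$ is exactly $\psi\in\So$, which is the natural class: written as $g(x,t)=e^{\frac{imt}2}\int_{\Rm} e^{-itH}(x,y)\psi(y)\,dy$, the oscillator kernel $e^{-itH}(x,y)$ has constant modulus $(4\pi|\sin t|)^{-m/2}$, so the integral converges absolutely for $\psi\in\So\subset L^1(\Rm)$.

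I would then produce the kernel by the shortcut that conjugation by $e^{\pm|x|^2/4}$ is precisely the map identifying $e^{-\tau(H-\frac m2)}$ with the Ornstein--Uhlenbeck heat semigroup $e^{\tau\mathscr L}$ (the computation above read in reverse). Consequently the two Gaussian factors cancel in $f=e^{|x|^2/4}g$ and one is left with the clean formula $f(x,t)=\int_{\Rm} G(x,y,it)\,\vf(y)\,dy$, where $G$ is the heat kernel \eqref{q}, so the whole matter reduces to continuing \eqref{q} to imaginary time. With $Q=I_m$, $B=-I_m$ one has $e^{tB}=e^{-t}I_m$ and $Q(t)=\tfrac{1-e^{-2t}}{2}I_m$, and the continuation rests on the single identity $\tfrac{1-e^{-2it}}{2}=i\,e^{-it}\sin t$. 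This gives $(\det Q(it))^{1/2}=e^{\frac{i\pi m}{4}}e^{-\frac{imt}2}(\sin t)^{m/2}$, reproducing the prefactor $\tfrac{(4\pi)^{-m/2}e^{imt/2}}{e^{i\pi m/4}(\sin t)^{m/2}}$ of \eqref{OUim}, while the exponent becomes $i/(4\sin t)$ times the holomorphic continuation of the covariance quadratic form, namely $|e^{it/2}y-e^{-it/2}x|^2=e^{-it}|x|^2-2\langle x,y\rangle+e^{it}|y|^2$ (so that the ostensibly missing real Gaussian damping is in fact hidden in the complex numerator).

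The main obstacle is the branch of the square root, and it is exactly what forces the split into $J^+$ and $J^-$. The factor $(\sinh\tau)^{m/2}$ underlying $(\det Q(it))^{1/2}$ has a branch point at $\tau=i\pi$, since $\sinh(i\pi)=i\sin\pi=0$; this is the caustic $t=\pi$ excluded from the statement. On $J^+=(0,\pi)$ one has $\sin t>0$ and $(i\sin t)^{m/2}=e^{\frac{i\pi m}{4}}(\sin t)^{m/2}$, whereas to continue along the imaginary axis past $t=\pi$ one must pass the branch point through the physical region $\Re\tau>0$; the argument of $i\sin t$ advances by $\pi$, supplying the Maslov--metaplectic phase $e^{\frac{i\pi m}{2}}$ and hence $e^{\frac{i3\pi m}{4}}$ on $J^-=(\pi,2\pi)$. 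Simultaneously $\sin t=-|\sin t|$ turns $e^{i(\cdots)/(4\sin t)}$ into $e^{-i(\cdots)/(4|\sin t|)}$, which is precisely the second line of \eqref{OUim}. I would make this rigorous by following $\arg(i\sin t)$ along $t\mapsto it$, or equivalently through the Hermite expansion $e^{-itH}\psi=\sum_k e^{-it(|k|_1+m/2)}\langle\psi,\phi_k\rangle\phi_k$, whose convergence in $\So$ for $\psi\in\So$ encodes the same phase.

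Finally I would verify the two defining properties. That $f$ solves the PDE on $\Rm\times J$ reduces, after the conjugation, to checking that the Mehler Gaussian satisfies $\p_t g=-i(H-\tfrac m2)g$ off the caustics; this follows by differentiating under the integral sign, the differentiations being legitimate because $\psi\in\So$ controls all $y$-derivatives of the integrand. For the initial datum one lets $t\to0^+$: then $\sin t\sim t$, $\cos t\to1$, and the kernel collapses to the free Schr\"odinger kernel $(4\pi it)^{-m/2}\exp\!\big(i\tfrac{|x-y|^2}{4t}\big)$, an approximate identity in the oscillatory sense on $\So$, so that $g(\cdot,t)\to\psi$ and therefore $f(\cdot,t)\to\vf$, giving $f(x,0)=\vf(x)$. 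The only genuinely delicate steps are thus the branch bookkeeping across $t=\pi$ and the justification of this oscillatory limit, both controlled by $\vf\in\K(\Rm)$.
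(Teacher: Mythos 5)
Your argument is correct in substance, but it follows a genuinely different route from the paper. The paper removes the drift by the substitution $v(x,t)=f(e^{it}x,t)$, which converts \eqref{cp0} into $\p_t v - iU'(t)\Delta v=0$ with $U(t)=e^{-it}\sin t$ as in \eqref{Qt0}; it then solves this by the Fourier transform and inverts the resulting complex Gaussian multiplier via H\"ormander's formula \eqref{gengaussi2}, in which the branch of $\sqrt{\det A}$ is fixed once and for all by the requirement that it be the analytic continuation through matrices with $\Re A\ge 0$ of the positive root. You instead conjugate by the ground state, $f=e^{|x|^2/4}g$, reducing \eqref{cp0} to the harmonic-oscillator Schr\"odinger equation and invoking the Mehler kernel continued to imaginary time; this is precisely the correspondence the paper isolates in Lemma \ref{L:ouho} and Proposition \ref{P:connect} of the appendix, run in the opposite direction and promoted to the engine of the proof. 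Your computation of the conjugated operator, of the prefactor from $(\det Q(it))^{1/2}$, and of the holomorphic quadratic form in the exponent all check out, and your closing verification (differentiation under the integral for the PDE, the oscillatory approximate-identity limit for the datum) is if anything more explicit than the paper's, which gets the initial condition for free from \eqref{hatv}. What the paper's route buys is that the drift-removal substitution and the complex covariance matrix generalize verbatim to the whole class \eqref{L}, which is the stated program of the paper; what yours buys is that the oscillator structure, and hence the classical Mehler formula, is visible from the first line. The one point where your write-up is not yet a proof is the phase on $J^-$: since the equation is linear and first order in time, checking the PDE and the $t\to 0^+$ limit pins down the formula only on $J^+$, so the factor $e^{\frac{i3\pi m}{4}}$ on $J^-$ must come from an actual continuation argument (your Maslov-phase heuristic made precise, or the Hermite expansion $e^{-itH}\psi=\sum_k e^{-it(|k|_1+m/2)}\sa\psi,\phi_k\da\phi_k$, which converges for $\psi\in\So$ and determines the kernel on all of $J$). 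You flag this yourself and propose the right fix; the paper resolves the same issue by reading off $A=e^{i\frac{3\pi}{2}}e^{-it}|\sin t|\,I_m$ inside H\"ormander's lemma. Note finally that \eqref{q} as printed carries a spurious square root on the quadratic form; you have correctly used the version without it.
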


The next result extends to the Ornstein-Uhlenbeck group $e^{it\mathscr L}$ the well-known connection between the Fourier transform $\mathscr F$ and the free Schr\"odinger group $e^{it\Delta}$. We note that the definition of $\mathscr F$ adopted in this paper is \eqref{ft}, 
so that Plancherel theorem gives $||\mathscr F u||_{L^2(\Rm)} = ||u||_{L^2(\Rm)}$.
Henceforth, when $\vf\in L^2(\Rm,d\gamma)$, we will write $f(x,t) = e^{it\mathscr L}\vf(x)$.

\begin{proposition}\label{P:semigroup}
Let $\vf\in \mathscr K(\Rm)$. Then for every $t\in J^+$ the function $f(x,t) = e^{i t \mathscr L} \vf(x)$ is given by the formula
\begin{equation}\label{final}
e^{- \frac{|x|^2}{4}} f(x,t) =  (4\pi
)^{-\frac{m}{2}}\frac{e^{\frac{imt}2}}{e^{\frac{i\pi m}{4}} (\sin t)^{\frac m2}} e^{i \frac{\cot t |x|^2}{4}} \mathscr F\left(e^{i \frac{\cot t |\cdot|^2}4} \psi\right)(\frac{x}{4 \pi \sin t}),
\end{equation}
where $\psi$ is defined by \eqref{psi}.
\end{proposition}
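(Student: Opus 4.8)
The plan is to start directly from the representation formula \eqref{OUim} of Proposition \ref{P:OUim}, valid for $t\in J^+$, and to massage the Gaussian phase into the shape of a Fourier transform. The first and most delicate step is to expand the quadratic exponent correctly. Since $y,x\in\Rm$ are real while $e^{\pm it/2}$ are complex scalars, the quantity $|e^{it/2}y-e^{-it/2}x|^2$ must be read as the analytic continuation $\sum_k (e^{it/2}y_k-e^{-it/2}x_k)^2$, not as a Hermitian modulus. With this reading, and using $e^{it/2}e^{-it/2}=1$, one obtains
\begin{equation*}
|e^{it/2}y-e^{-it/2}x|^2 = e^{it}|y|^2 - 2\langle x,y\rangle + e^{-it}|x|^2.
\end{equation*}
Writing $e^{\pm it}=\cos t\pm i\sin t$ and dividing by $4\sin t$, I would then record that $\frac{e^{\pm it}}{4\sin t}=\frac{\cot t}{4}\pm\frac{i}{4}$, so that the $|y|^2$ and $|x|^2$ contributions each split into a phase $\frac{\cot t}{4}|\cdot|^2$ and a purely imaginary piece $\pm\frac{i}{4}|\cdot|^2$, while the cross term yields $-\frac{\langle x,y\rangle}{2\sin t}$.

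The second step exploits the definition \eqref{psi}, in the form $\vf(y)=e^{|y|^2/4}\psi(y)$. After multiplying \eqref{OUim} by $e^{-|x|^2/4}$ and inserting the overall factor $i$ into the expanded exponent, the piece $i\cdot\frac{i}{4}|y|^2=-\frac14|y|^2$ produces a weight $e^{-|y|^2/4}$ that cancels the Gaussian hidden in $\vf(y)$, replacing $\vf(y)$ by $\psi(y)$; symmetrically, the piece $i\cdot(-\frac{i}{4})|x|^2=+\frac14|x|^2$ is annihilated by the external factor $e^{-|x|^2/4}$. What survives inside the integral is precisely $e^{i\frac{\cot t}{4}|x|^2}\,e^{i\frac{\cot t}{4}|y|^2}\,e^{-i\frac{\langle x,y\rangle}{2\sin t}}\psi(y)$, and the $x$-dependent factor $e^{i\frac{\cot t}{4}|x|^2}$ pulls out of the integral.

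The third step is to recognize the remaining integral as a Fourier transform. With the convention \eqref{ft} for which Plancherel reads $\|\mathscr F u\|_{L^2(\Rm)}=\|u\|_{L^2(\Rm)}$, the linear phase $e^{-i\frac{\langle x,y\rangle}{2\sin t}}$ equals $e^{-2\pi i\langle y,\xi\rangle}$ with $\xi=\frac{x}{4\pi\sin t}$, so the integral becomes $\mathscr F\big(e^{i\frac{\cot t}{4}|\cdot|^2}\psi\big)\big(\frac{x}{4\pi\sin t}\big)$. Reassembling the constants $(4\pi)^{-m/2}e^{imt/2}/(e^{i\pi m/4}(\sin t)^{m/2})$ then produces exactly \eqref{final}.

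I expect no deep obstacle: the argument is a careful but essentially algebraic rearrangement of \eqref{OUim}. The two points demanding genuine attention are, first, the correct analytic-continuation reading of $|e^{it/2}y-e^{-it/2}x|^2$, on which the entire cancellation of Gaussian weights hinges; and second, the verification that the Fourier transform is well defined pointwise. The latter holds because $\vf\in\mathscr K(\Rm)$ forces $\psi\in\mathscr S(\Rm)$, and multiplication by the unimodular factor $e^{i\frac{\cot t}{4}|\cdot|^2}$, whose derivatives grow only polynomially, keeps $e^{i\frac{\cot t}{4}|\cdot|^2}\psi$ in $\mathscr S(\Rm)$, so that the defining integral converges absolutely for each fixed $t\in J^+$.
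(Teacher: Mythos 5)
Your proposal is correct and follows essentially the same route as the paper: expand $|e^{it/2}y-e^{-it/2}x|^2$ as in \eqref{expa} (with the analytic-continuation reading, which is indeed the intended one), use $\vf=e^{|\cdot|^2/4}\psi$ to absorb the real exponential pieces, and identify the remaining oscillatory integral as $\mathscr F\big(e^{i\frac{\cot t}{4}|\cdot|^2}\psi\big)$ evaluated at $\frac{x}{4\pi\sin t}$. The only cosmetic difference is that the paper performs the substitution $y=4\pi\sin t\,z$ and invokes the dilation operator, whereas you read off the frequency variable directly from the linear phase; the computations are identical.
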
 

\begin{remark}\label{R:stable}
We explicitly note that it follows from \eqref{final} that
\begin{equation}\label{nice}
e^{i t \mathscr L} : \mathscr K(\Rm)\ \longrightarrow\ \mathscr K(\Rm).
\end{equation}
\end{remark}

To state the next proposition, we stress that for Schr\"odinger equations such as \eqref{cpHi}, or even the model problem \eqref{cp0}, restriction inequalities are at the moment unchartered territory. As a consequence, dispersive estimates such as \eqref{class} acquire an even stronger relevance in the implementation of the above mentioned approach of Ginibre-Velo to Strichartz estimates. In this connection, we have the following local result.
 
\begin{proposition}\label{P:disp}
Let $\vf\in \mathscr K(\Rm)$, and $f(x,t) = e^{it\mathscr L}\vf(x)$ with $t \not= k \pi$, with $k\in \mathbb Z$. For any $1\le p\le 2$ one has
\begin{equation}\label{disp}
||e^{-\frac{|\cdot|^2}4} f(\cdot,t)||_{L^{p'}(\Rm)} \le \left(\frac{p^{1/p}}{{p'}^{1/p'}}\right)^{\frac m2} \ \frac{1}{(4\pi |\sin t|)^{m(\frac 12 - \frac 1{p'})}}\ ||e^{-\frac{|\cdot|^2}4} \vf||_{L^p(\Rm)},
\end{equation}
where $1/p + 1/{p'} = 1$. The estimate \eqref{disp} is optimal, in the sense that it cannot possibly hold in the range $2<p\le \infty$. Moreover, equality is attained in \eqref{disp} by initial data $\vf(x) =  e^{-\alpha |x|^2+ \frac{|x|^2}4}$, $\Re \alpha>0$.  
\end{proposition}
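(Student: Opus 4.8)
The plan is to reduce \eqref{disp} to the sharp Hausdorff--Young inequality by exploiting the representation formula \eqref{final} of Proposition \ref{P:semigroup}, which already exhibits $e^{-\frac{|x|^2}4}f(x,t)$ as a dilated Fourier transform of a unimodular modification of $\psi$. Concretely, for $t\in J^+$ I set $g = e^{i\frac{\cot t\,|\cdot|^2}{4}}\psi$ and observe that $|g| = |\psi|$ pointwise, so that $\|g\|_{L^p(\Rm)} = \|\psi\|_{L^p(\Rm)} = \|e^{-\frac{|\cdot|^2}4}\vf\|_{L^p(\Rm)}$ by \eqref{psi}. Taking absolute values in \eqref{final}, every exponential prefactor except the power of $\sin t$ is unimodular and drops out, leaving
\[
\big|e^{-\frac{|x|^2}4}f(x,t)\big| = (4\pi)^{-\frac m2}\,|\sin t|^{-\frac m2}\,\big|\mathscr F g\big(\tfrac{x}{4\pi\sin t}\big)\big|.
\]

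Next I would compute the $L^{p'}(\Rm)$ norm of this identity and perform the change of variable $\xi = \frac{x}{4\pi\sin t}$, which contributes a Jacobian $(4\pi|\sin t|)^m$. Collecting the powers of $4\pi|\sin t|$ produces the exact dilation factor
\[
\|e^{-\frac{|\cdot|^2}4}f(\cdot,t)\|_{L^{p'}(\Rm)} = (4\pi|\sin t|)^{-m(\frac12-\frac1{p'})}\,\|\mathscr F g\|_{L^{p'}(\Rm)},
\]
so that the denominator in \eqref{disp} appears automatically. It then remains to bound $\|\mathscr F g\|_{L^{p'}}$ by $\|g\|_{L^p}$. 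Since $\mathscr F$ is normalised (see \eqref{ft}) so that Plancherel holds with constant one, the sharp Hausdorff--Young inequality of Babenko and Beckner yields precisely $\|\mathscr F g\|_{L^{p'}(\Rm)}\le \big(\frac{p^{1/p}}{{p'}^{1/p'}}\big)^{m/2}\|g\|_{L^p(\Rm)}$ for $1\le p\le 2$, and substituting $\|g\|_{L^p} = \|e^{-\frac{|\cdot|^2}4}\vf\|_{L^p}$ closes the argument. For $t\in J^-$ one repeats this verbatim from the second branch of \eqref{OUim}; since only moduli enter, the same bound with $|\sin t|$ results, and periodicity of the coefficients extends it to every $t\ne k\pi$.

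For the optimality claim I would run the reduction in reverse. As $\vf$ ranges over $\mathscr K(\Rm)$, the function $\psi = e^{-\frac{|\cdot|^2}4}\vf$ ranges over all of $\mathscr S(\Rm)$ by \eqref{K}, and multiplication by the unimodular phase $e^{i\frac{\cot t\,|\cdot|^2}4}$ is a bijection of $\mathscr S(\Rm)$; hence $g$ ranges over a dense subset of $L^p(\Rm)$. Were \eqref{disp} to hold for some $p>2$, the two displayed identities above would force the bound $\|\mathscr F g\|_{L^{p'}}\le C\|g\|_{L^p}$ on this dense set, and thus a bounded extension $\mathscr F\colon L^p(\Rm)\to L^{p'}(\Rm)$, contradicting the classical failure of Hausdorff--Young for $p>2$ recalled in the Introduction.

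Finally, the equality case is the only place where genuine input beyond bookkeeping is needed. Choosing $\vf(x) = e^{-\alpha|x|^2 + \frac{|x|^2}4}$ with $\Re\alpha>0$ gives $\psi = e^{-\alpha|\cdot|^2}$ and hence $g = e^{-(\alpha - i\frac{\cot t}4)|\cdot|^2}$, a complex Gaussian with strictly positive-definite real part. By Lieb's characterisation of the extremisers of the sharp Hausdorff--Young inequality, precisely such (possibly complex) Gaussians attain equality; since all steps above other than the Hausdorff--Young estimate are identities, equality propagates to \eqref{disp}. The main obstacle is thus not the computation, which is essentially forced by \eqref{final}, but rather the correct identification of the stated constant as Beckner's sharp constant and, for the last assertion, the appeal to Lieb's theorem in order to handle the \emph{complex} Gaussian extremisers rather than merely the real ones.
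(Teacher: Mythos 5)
Your proposal is correct and follows essentially the same route as the paper: take moduli in \eqref{final} to get \eqref{sfinalicchietto}, change variables to extract the factor $(4\pi|\sin t|)^{-m(\frac12-\frac1{p'})}$, and invoke Beckner's sharp Hausdorff--Young inequality \eqref{HY}. Your treatment of the optimality for $p>2$ and of the complex-Gaussian equality case via Lieb's characterisation of extremisers is in fact slightly more explicit than the paper's, which handles these points only by the remark following \eqref{HY}.
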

Note that as $t\to 0$ the estimate \eqref{disp} displays the same behaviour $t^{-m(\frac 12 - \frac{1}{p'})}$ as that in \eqref{class} for the free Schr\"odinger group $e^{i t \Delta}$. 
It is interesting to compare \eqref{disp} with the following dispersive estimate for the Schr\"odinger equation with friction (see \eqref{Kol} above)
\begin{equation}\label{OUrd}
\p_t v - i \Delta v + \sa x,\nabla v\da = 0.
\end{equation}
The PDE \eqref{OUrd} is very different from \eqref{cp0}, as one can readily surmise from its invariance group of (real) left-translations 
\[
\sigma_{(x,s)} (y,t) = (x,s)\circ (y,t) = (y + e^{t}x,t+s).
\] 
As a special case of the result in \cite[Theorem 4.1]{GL}, one obtains for the semigroup $\{\mathcal T(t)\}_{t\ge 0}$ associated with the Cauchy problem for \eqref{OUrd}
\begin{equation}\label{feOU}
||\mathcal T(t) \vf||_{L^{p'}(\Rm)} \le C(m,p)\ \frac{e^{\frac{m}{p'} t}}{(1-e^{-2t})^{m(\frac 12 - \frac 1{p'})}} ||\vf||_{L^p(\Rm)}.
\end{equation}
The qualitative behaviour as $t\to 0^+$ in \eqref{disp} and \eqref{feOU} is the same, but because of the presence of $e^{-\frac{|\cdot|^2}4}$ in the former, the two estimates are incomparable.

Our final result is Proposition \ref{P:main} which represents a form of uniqueness for the group $e^{i t \mathscr L}$: a certain $L^2$ decay in Gaussian measure at two different times completely determines the solution. This kind of result is reminiscent of Hardy's uncertainty principle for the Fourier transform in his classical work \cite{Ha}. Nowadays, this subject is vast, and there exist many beautiful and important results scattered in the literature which is impossible to entirely quote here. The reader can see \cite{CP}, \cite{SST}, \cite{Fo} and \cite{Veluma} for an interesting account which covers up to 2004. Subsequently, the subject received a new impulse with the fundamental works \cite{EKPVcpde}, \cite{EKPVjems}, \cite{EKPVduke}, \cite{CEKPV}, \cite{EKPVjlms}, in which a whole new program of uncertainty inequalities for equations of Schr\"odinger type was developed. One should also see  the recent paper \cite{FM}. The reader familiar with the subject will surmise that the next proposition is one more manifestation of the ideas of Escauriaza, Kenig, Ponce and Vega in the above mentioned works. 

\begin{proposition}\label{P:main}
Assume that for some $a, b>0$ one has
\begin{equation}\label{L2}
||e^{a|\cdot|^2} f(\cdot,0)||_{L^2(\Rm, d\gamma)} + ||e^{b|\cdot|^2}  f(\cdot,s)||_{L^2(\Rm,d\gamma)}<\infty.
\end{equation}
If $a b \sin^2 s  \ge \frac{1}{16}$, then $f\equiv 0$ in $\Rm\times \R$. 
\end{proposition}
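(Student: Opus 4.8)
The plan is to convert the two weighted $L^2(\Rm,d\gamma)$ bounds in \eqref{L2} into Gaussian-decay conditions on a single function and its Fourier transform, and then to invoke the $L^2$ version of Hardy's uncertainty principle. First I would pass from $f$ to the auxiliary function $\psi(\cdot,t) = e^{-|\cdot|^2/4}f(\cdot,t)$, as in \eqref{psi}. Since $d\gamma = (2\pi)^{-m/2}e^{-|x|^2/2}dx$ and $|\psi(x,t)|^2 = e^{-|x|^2/2}|f(x,t)|^2$, a direct computation gives
\[
\|e^{a|\cdot|^2}f(\cdot,0)\|_{L^2(\Rm,d\gamma)}^2 = (2\pi)^{-\frac m2}\|e^{a|\cdot|^2}\psi_0\|_{L^2(\Rm)}^2, \qquad \psi_0 = e^{-|\cdot|^2/4}\vf,
\]
and likewise the second term of \eqref{L2} is finite if and only if $e^{b|\cdot|^2}\psi(\cdot,s)\in L^2(\Rm)$. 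Thus \eqref{L2} is equivalent to the pair of conditions $e^{a|\cdot|^2}\psi_0\in L^2(\Rm)$ and $e^{b|\cdot|^2}\psi(\cdot,s)\in L^2(\Rm)$.

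Next I would feed this into Proposition \ref{P:semigroup}. Setting $g = e^{i\cot s\,|\cdot|^2/4}\psi_0$, formula \eqref{final} shows (for $s\in J^+$) that
\[
|\psi(x,s)| = (4\pi\sin s)^{-\frac m2}\,\Big|\mathscr F g\Big(\tfrac{x}{4\pi\sin s}\Big)\Big|,
\]
because the scalar prefactor has modulus $(4\pi\sin s)^{-m/2}$ and $e^{i\cot s\,|x|^2/4}$ is unimodular. Since $|g| = |\psi_0|$, the first condition reads exactly $e^{a|\cdot|^2}g\in L^2(\Rm)$. Performing the change of variables $\xi = x/(4\pi\sin s)$ in the second condition turns $e^{b|\cdot|^2}\psi(\cdot,s)\in L^2(\Rm)$ into $e^{c|\cdot|^2}\mathscr F g\in L^2(\Rm)$ with $c = b(4\pi\sin s)^2 = 16\pi^2 b\sin^2 s$.

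Finally I would apply the $L^2$ form of Hardy's uncertainty principle: with $\mathscr F$ normalised as in \eqref{ft}, if $e^{a|\cdot|^2}g\in L^2(\Rm)$ and $e^{c|\cdot|^2}\mathscr F g\in L^2(\Rm)$ with $ac\ge \pi^2$, then $g\equiv 0$. Here $ac = 16\pi^2 ab\sin^2 s$, so the hypothesis $ab\sin^2 s\ge \tfrac1{16}$ is precisely $ac\ge \pi^2$. Hence $g\equiv 0$, so $\psi_0\equiv 0$, $\vf\equiv 0$, and by linearity of the representation in Proposition \ref{P:OUim} (equivalently, because $e^{it\mathscr L}$ is a group) $f\equiv 0$ in $\Rm\times\R$.

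The hard part will be the borderline $ab\sin^2 s = \tfrac1{16}$, i.e. the critical Hardy exponent $ac = \pi^2$. The classical pointwise theorem admits here a one-parameter family of Gaussian extremizers $g(x) = c_0\,e^{-a|x|^2}$; however, in the present formulation such a $g$ forces $e^{a|\cdot|^2}g = c_0\,e^{i\cot s\,|\cdot|^2/4}$ to have constant modulus $|c_0|$, which is not square-integrable on $\Rm$ unless $c_0 = 0$. Thus the strict $L^2$ integrability built into \eqref{L2} eliminates the extremizers and yields $f\equiv 0$ even at equality, which is exactly why the statement is sharp with ``$\ge$''. A secondary technical point is that Proposition \ref{P:semigroup} is stated for $s\in J^+$; for $s\in J^-$ one repeats the argument using the second branch of \eqref{OUim}, while for $s = k\pi$ the second summand of \eqref{L2} carries no information since $\sin s = 0$.
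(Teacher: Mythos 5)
Your argument is correct and coincides with the paper's own proof: both pass to $h_s=e^{i\cot s\,|\cdot|^2/4}\psi$, use Proposition \ref{P:semigroup} to identify $|\mathscr F h_s(x/(4\pi\sin s))|$ with $(4\pi\sin s)^{m/2}e^{-|x|^2/4}|f(x,s)|$, rescale to get the weight $16\pi^2 b\sin^2 s$, and invoke the $L^2$ Hardy uncertainty principle of Cowling--Escauriaza--Kenig--Ponce--Vega at the threshold $ac\ge\pi^2$. Your closing remarks on the borderline case and on $s\in J^-$, $s=k\pi$ are accurate but not needed, since the cited $L^2$ theorem already covers equality.
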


We note explicitly that the assumption $a b \sin^2 s\ge \frac{1}{16}$ automatically excludes the possibility that $s = k \pi$, with $k\in \mathbb Z$. This discrete set of points is where the complex covariance matrix $Q(t) = U(t) I_m$, where $U(t)$ is defined in \eqref{Qt0} below, becomes singular and the representation \eqref{OUim} of $e^{it\mathscr L}$ ceases to be valid. The proof of Proposition \ref{P:main} combines our formula \eqref{final} in Proposition \ref{P:semigroup} with the  $L^2$ version in \cite{CEKPV} of the above cited works \cite{EKPVcpde}-\cite{EKPVjlms}. We note that coupling Proposition \ref{P:semigroup} with the uncertainty principle for the Fourier transform due to Hardy, see \cite{Ha} or also \cite[Theorem 1.2]{CEKPV}, we obtain the following $L^\infty$ version of Proposition \ref{P:main}. 

\begin{proposition}\label{P:main2}
Suppose that for some $C, a, b>0$ one has for any $x\in \Rm$
\begin{equation}\label{Linfty}
|e^{-\frac{|x|^2}4}f(x,0)| \le C e^{-a|x|^2},\ \ \ \ \ \  |e^{-\frac{|x|^2}4} f(x,s)| \le  C e^{-b|x|^2}.
\end{equation}
If $a b \sin^2 s  \ge \frac{1}{16}$, then $f\equiv 0$ in $\Rm\times \R$. 
\end{proposition}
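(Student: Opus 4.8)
The plan is to reduce the statement to the vanishing of the initial datum and then feed the two pointwise bounds \eqref{Linfty} into the classical Hardy uncertainty principle through the representation \eqref{final}. Since $f(\cdot,t) = e^{it\mathscr L}\vf$, proving $\vf\equiv 0$ immediately gives $f\equiv 0$ on all of $\Rm\times\R$; and as $\psi = e^{-|\cdot|^2/4}\vf$ with $e^{-|\cdot|^2/4}>0$, it suffices to show $\psi\equiv 0$. By the $\pi$-periodicity of $\sin^2 s$ I may assume $s\in J^+$, the case $s\in J^-$ being identical through the second branch of \eqref{OUim}. Writing
\[
g(x) = e^{i\frac{\cot s\,|x|^2}{4}}\,\psi(x),
\]
Proposition \ref{P:semigroup} says that $e^{-|x|^2/4}f(x,s)$ equals a unimodular constant times $(\sin s)^{-m/2}\,e^{i\cot s\,|x|^2/4}\,\mathscr F g\big(\tfrac{x}{4\pi\sin s}\big)$.

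First I would convert \eqref{Linfty} into Gaussian bounds on $g$ and on $\mathscr F g$. The hypothesis at $t=0$ is exactly $|\psi(x)|\le C e^{-a|x|^2}$, and since $|g|=|\psi|$ this gives $|g(x)|\le C e^{-a|x|^2}$. For the frequency side, all the factors $e^{ims/2}$, $e^{-i\pi m/4}$ and $e^{i\cot s\,|x|^2/4}$ appearing in \eqref{final} are unimodular and disappear under absolute value, so the hypothesis at $t=s$ becomes
\[
\Big|\mathscr F g\big(\tfrac{x}{4\pi\sin s}\big)\Big|\le C\,(4\pi)^{m/2}|\sin s|^{m/2}\,e^{-b|x|^2}.
\]
Substituting $\xi = \tfrac{x}{4\pi\sin s}$, so that $|x|^2 = 16\pi^2\sin^2 s\,|\xi|^2$, this reads
\[
|\mathscr F g(\xi)|\le C'\,e^{-16\pi^2 b\,\sin^2 s\,|\xi|^2},\qquad C' = C\,(4\pi)^{m/2}|\sin s|^{m/2}.
\]

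I would then apply Hardy's theorem, in the form \cite[Theorem 1.2]{CEKPV} (see also \cite{Ha}), to the function $g$. With the unitary normalisation of $\mathscr F$ used here, for which $e^{-\pi|x|^2}$ is self-dual, a spatial decay $e^{-\pi A|x|^2}$ together with a frequency decay $e^{-\pi B|\xi|^2}$ forces $g\equiv 0$ whenever $AB>1$, and leaves only a constant multiple of a Gaussian when $AB=1$. In our situation $\pi A = a$ and $\pi B = 16\pi^2 b\sin^2 s$, hence
\[
AB = \frac{a}{\pi}\cdot 16\pi b\,\sin^2 s = 16\,ab\,\sin^2 s,
\]
so the assumption $ab\sin^2 s\ge\tfrac1{16}$ is precisely $AB\ge 1$. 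When the inequality is strict Hardy's theorem yields $g\equiv 0$, whence $\psi\equiv 0$, $\vf\equiv 0$, and finally $f\equiv 0$.

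The step I expect to be the main obstacle is twofold. First, the constants must be tracked with care: the dilation $4\pi\sin s$ carried by \eqref{final} and the factor $\pi$ built into the critical product of Hardy's theorem must combine to reproduce exactly the sharp value $\tfrac1{16}$, so that the natural hypothesis on $g$ reads precisely $AB\ge 1$; a slip in the Fourier normalisation would displace this threshold. Second, and genuinely delicate, is the endpoint $ab\sin^2 s = \tfrac1{16}$, i.e. $AB = 1$: there the classical pointwise form of Hardy's principle forces $g$ only to be a constant multiple of a Gaussian rather than to vanish, so the case $AB>1$ gives $f\equiv 0$ immediately, while reaching the same conclusion at $AB=1$ is exactly where the sharp endpoint analysis is required and is the crux of the argument. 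I would also record that \eqref{final} may be applied under the mere decay hypotheses \eqref{Linfty}, beyond $\vf\in\mathscr K(\Rm)$, by combining the mapping property \eqref{nice} with a routine approximation.
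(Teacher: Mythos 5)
Your reduction is exactly the paper's intended argument: the proposition is obtained there by coupling Proposition \ref{P:semigroup} with the pointwise Hardy uncertainty principle, precisely as you do, and your bookkeeping of the dilation $4\pi\sin s$ and of the normalisation \eqref{ft} (for which $e^{-\pi|x|^2}$ is self-dual) is correct: the hypotheses \eqref{Linfty} become $|g(x)|\le Ce^{-a|x|^2}$ and $|\mathscr F g(\xi)|\le C'e^{-16\pi^2 b\sin^2 s\,|\xi|^2}$, and the product of the two Gaussian rates reaches the Hardy threshold $\pi^2$ exactly when $ab\sin^2 s\ge\frac1{16}$. For $ab\sin^2 s>\frac1{16}$ your proof is complete and matches the paper's.

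The endpoint you flag, however, is not merely delicate: it cannot be closed, because the statement is false at $ab\sin^2 s=\frac1{16}$ as literally written. At the critical product the $L^\infty$ form of Hardy's theorem concludes only that $g(x)=c\,e^{-a|x|^2}$, and this possibility is realised. Take
\[
\vf(x)=e^{\frac{|x|^2}{4}}\,e^{-\left(a+i\frac{\cot s}{4}\right)|x|^2}\in\mathscr K(\Rm),\qquad \psi(x)=e^{-\left(a+i\frac{\cot s}{4}\right)|x|^2},
\]
so that $g=e^{i\frac{\cot s\,|\cdot|^2}{4}}\psi=e^{-a|\cdot|^2}$. Then $|e^{-|x|^2/4}f(x,0)|=e^{-a|x|^2}$, while \eqref{final} gives $|e^{-|x|^2/4}f(x,s)|=\mathrm{const}\cdot e^{-\frac{|x|^2}{16a\sin^2 s}}=\mathrm{const}\cdot e^{-b|x|^2}$ when $ab\sin^2 s=\frac1{16}$; both bounds in \eqref{Linfty} hold and $f\not\equiv 0$. (These are the same chirped Gaussians that saturate \eqref{disp}.) The discrepancy with Proposition \ref{P:main} is genuine: the $L^2$ Hardy theorem, Theorem \ref{T:har2}, does force vanishing at the critical product $ab=\pi^2$ because the endpoint Gaussian is excluded by square-integrability of $e^{a|x|^2}h$, whereas the pointwise version is not. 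So you should either prove the proposition under the strict inequality $ab\sin^2 s>\frac1{16}$, or state the Gaussian exception at equality; your instinct that the endpoint is ``the crux'' should be upgraded to the observation that it fails.
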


In closing we mention that there exists a deep link between \eqref{cp0} and the Cauchy problem for the quantum mechanics harmonic oscillator \eqref{H}. For the  problem \eqref{cpHi} such link becomes tenuous and it is all but clear (at least, to this author) what should replace the harmonic oscillator in such general framework. Since the present work, with the underlying Wiener space and invariant measure, serves as a model for the problem \eqref{cpHi}, we have not emphasised this link in the course of the paper, but have provided a brief (purely PDE) account in the appendix in Section \ref{S:app}. The reader interested in more theoretical aspects should consult e.g. \cite{Ta1}, \cite{Ta2}, \cite{BGV}, and especially the masterful lectures of Pauli \cite{Pauli}. Using the results in Section \ref{S:app}, one can derive from the dispersive estimate in Proposition \ref{P:disp} a corresponding result for the solution to \eqref{cp0}. In the opposite direction, Propositions \ref{P:main} and \ref{P:main2} are closely connected to results for the harmonic oscillator available in the literature, see the works \cite{Vel}, \cite{CF}, and the more recent \cite{KJO} and \cite{RR}, where the authors prove a conjecture by Vemuri, respectively up to an exceptional arithmetic progression, and in full. However, as far as we are aware of, in none of these papers the link with the Ornstein-Uhlenbeck operator and its invariant measure has been explicitly highlighted. 
 
The present work is organised as follows. In Section \ref{S:senzadrift} we establish the representation formula \eqref{OUim} in Proposition \ref{P:OUim} and we derive its main consequence,  Proposition \ref{P:semigroup}. With this result in hand, we then prove Proposition \ref{P:disp}. At the end of the section we prove Proposition \ref{P:main}.

\medskip

\textbf{Acknowledgement.} We thank B. Cassano, L. Fanelli, G. Folland, A. Lunardi, J. Ramos, S. Thangavelu and L. Vega for stimulating exchanges. 

\vskip 0.2in

\section{The Schr\"odinger equation}\label{S:senzadrift}

In this section we solve the Cauchy problem \eqref{cp0} by constructing a representation of the solution operator. Our main result is the proof of \eqref{OUim} in Proposition \ref{P:OUim}, which we then use to prove \eqref{final} in Proposition \ref{P:semigroup} and \eqref{disp} in Proposition \ref{P:disp}.

\begin{proof}[Proof of Proposition \ref{P:OUim}]

We begin with a simple, but critical observation. Suppose that $v$ and $f$ are connected by the relation
\begin{equation}\label{drift0}
v(x,t) = f(e^{i t} x,t).
\end{equation}
Then, $f$ is a solution of the Cauchy problem \eqref{cp0} if and only if $v$ solves the problem
\begin{equation}\label{cpGsenzadrift00}
\begin{cases}
\p_t v - i U'(t) \Delta v = 0,
\\
v(x,0) = \vf(x),
\end{cases}
\end{equation} 
where we have let
\begin{equation}\label{Qt0}
U(t) =  \int_0^t e^{-2is} ds = \frac{1-e^{-2it}}{2i} = e^{-it} \frac{e^{it}-e^{-it}}{2i} = e^{-it} \sin t.
\end{equation}
To prove that $v$ solves \eqref{cpGsenzadrift00}, we argue as follows. The chain rule gives from \eqref{drift0}
\[
v_t(x,t) = i e^{it} \sa x,\nabla f(e^{it} x,t)\da + f_t(e^{it} x,t).
\]
On the other hand, the PDE in \eqref{cp0} gives
\[
f_t(e^{it} x,t) = i \Delta f(e^{it} x,t) - i e^{it} \sa x,\nabla f(e^{it} x,t)\da.
\]
Combining the latter two equations, we infer that $v$ solves 
\[
v_t(x,t) = i \Delta f(e^{it} x,t).
\]
Next, differentiating \eqref{drift0} we find
\[
\Delta v(x,t) = e^{2it} \Delta f(e^{it} x,t).
\]
We thus conclude that 
\[
v_t(x,t) = i e^{-2it} \Delta v(x,t) = i U'(t) \Delta v(x,t),
\]
where in the second equality we have used \eqref{Qt0}.
Summarising, the function $v$ solves the problem \eqref{cpGsenzadrift00}.
To find a representation formula for the latter, we use the Fourier transform.
Supposing that $v$ be a solution, we define
\begin{equation}\label{pFTv}
\hat v(\xi,t) = \mathscr F(v)(\xi,t) = \int_{\Rm} e^{-2\pi i\sa \xi,x\da} v(x,t) dx.
\end{equation}
Then \eqref{cpGsenzadrift00} is transformed into
\begin{equation}\label{cpGsenzadrift0}
\begin{cases}
\p_t \hat v + 4\pi^2 i U'(t)|\xi|^2 \hat v = 0,
\\
\hat v(\xi,0) = \hat \vf(\xi),
\end{cases}
\end{equation} 
whose solution is given by
\begin{equation}\label{hatv}
\hat v(\xi,t) = \hat \vf(\xi) e^{-4\pi^2 i U(t) |\xi|^2}.
\end{equation} 
Note that, with $U(t)$ as in \eqref{Qt0}, for every $t\in J$ the matrix $Q(t) = U(t) I_m$ is invertible. Moreover, we have
\begin{equation}\label{iQ}
i Q(t) = i U(t) I_m = i(\cos t - i \sin t) \sin t\ I_m = \sin^2 t\ I_m + i \frac{\sin{2t}}2 I_m. 
\end{equation}
We now invoke \cite[Theorem 7.6.1]{Hobook}, which we formulate as follows: Let $A\in G\ell(\mathbb C,m)$ be such that $A^\star = A$ and $\Re A \ge 0$. Then 
\begin{equation}\label{gengaussi2}
\mathscr F\left(\frac{(4\pi)^{-\frac{m}{2}}}{\sqrt{\operatorname{det} A}} e^{- \frac{\sa A^{-1}\cdot,\cdot\da}{4}}\right)(\xi) =
e^{- 4 \pi^2  \sa A\xi,\xi\da},
\end{equation}
where $\sqrt{\operatorname{det} A}$ is the unique analytic branch such that $\sqrt{\operatorname{det} A}>0$ when $A$ is real. 
If in \eqref{gengaussi2} we take 
\[
A = i Q(t) = i U(t) I_m =  i e^{-it} \sin t\ I_m,
\]
with  $t\in J^+$, then \eqref{iQ} gives $\Re A = \sin^2 t\ I_m \ge 0$, and
\[
A^{-1} = - i  \frac{e^{it}}{\sin t} I_m.
\]
We thus find 
\begin{equation}\label{larsetto2}
e^{-4\pi^2 i U(t) |\xi|^2}  = \mathscr F\left(\frac{e^{\frac{imt}2}}{e^{\frac{i\pi m}{4}} (\sin t)^{\frac m2}}(4\pi
)^{-\frac{m}{2}} e^{i e^{it}\frac{|\cdot|^2}{4 \sin t}}\right)(\xi).
\end{equation}
From \eqref{hatv} and \eqref{larsetto2} we conclude that for every $x\in \Rm$ and $t\in J^+$
\begin{equation}\label{hatv2}
v(x,t) = \frac{e^{\frac{imt}2}}{e^{\frac{i\pi m}{4}} (\sin t)^{\frac m2}}(4\pi
)^{-\frac{m}{2}} \int_{\Rm} e^{i e^{it}\frac{|y-x|^2}{4 \sin t}} \vf(y) dy.
\end{equation} 
Finally, keeping \eqref{drift0} in mind, after some elementary algebraic manipulations, we obtain the representation \eqref{OUim} when $t\in J^+$. The part corresponding to $t\in J^-$ follows by elementary changes if one observes that now $A = e^{i\frac{3\pi}2} e^{-it} |\sin t| I_m$. 

\end{proof}

It may be of interest to compare \eqref{OUim} with the well-known Mehler representation (see \cite{OU}, \cite{SS}, \cite{Bo2} and \cite{LMP}) 
\begin{align}\label{gustavo}
u(x,t) & = (4\pi)^{- \frac m2} e^{m t \sqrt \omega} \left(\frac{2\sqrt \omega}{\sinh(2t\sqrt \omega)}\right)^{\frac m2}
\\
& \times \int_{\Rm} \exp\left( -  \frac{\sqrt \omega}{2 \sinh(2t\sqrt \omega)} |e^{t\sqrt \omega} y - e^{-t\sqrt \omega} x|^2\right) \vf(y) dy
\notag
\end{align}
for the solution of the Cauchy problem for the Ornstein-Uhlenbeck operator
\begin{equation}\label{cpou}
\begin{cases}
u_t - \Delta u + 2 \sqrt \omega \langle x,\nabla u\rangle  = 0,\ \ \ \ \ \omega>0,
\\
u(x,0) = \vf(x).
\end{cases}
\end{equation}
 If one takes $\omega = \frac 14$, keeping in mind that $\sinh it = i \sin t$, then it is clear that by \emph{formally} substituting $t\to it$ in \eqref{gustavo}, one obtains the case $t\in J^+$ of \eqref{OUim}. Such formal manipulation is reminiscent of the physicist' Wick rotation, see \cite[Section 3]{Wi}.

\begin{proof}[Proof of Proposition \ref{P:semigroup}]
To further unravel \eqref{OUim}, and also to better clarify the role of the class $\mathscr K(\Rm)$ in \eqref{K}, note that if for $t\in J^+$ we expand
\begin{equation}\label{expa}
\frac{|e^{it/2}y-e^{-it/2} x|^2}{4 \sin t} = \frac{e^{it}|y|^2 + e^{-it}|x|^2 - 2\sa x,y\da}{4 \sin t},
\end{equation}
we find 
\begin{equation}\label{OUim4}
f(x,t) = \frac{(4\pi
)^{-\frac{m}{2}}e^{\frac{imt}2}}{e^{\frac{i\pi m}{4}} (\sin t)^{\frac m2}} \int_{\Rm} e^{i \frac{e^{it}|y|^2 + e^{-it}|x|^2 - 2\sa x,y\da}{4 \sin t}} \vf(y) dy.
\end{equation}
The change of variable $y = 4 \pi \sin t\ z$ in the integral in \eqref{OUim4} gives
\begin{align*}
& \int_{\Rm} e^{- i \frac{\sa x,y\da}{2 \sin t}} e^{i \frac{e^{it}|y|^2 + e^{-it}|x|^2}{4 \sin t}} \vf(y) dy = (4 \pi \sin t)^m e^{i \frac{e^{-it}|x|^2}{4 \sin t}} \int_{\Rm} e^{- 2 \pi i \sa x,z\da} e^{i \frac{e^{it}  |4 \pi \sin t\ z|^2}{4 \sin t}} \vf(4 \pi \sin t\ z) dz
\\
& = (4 \pi \sin t)^m e^{\frac{|x|^2}{4}} e^{i \frac{\cot t |x|^2}{4}} \int_{\Rm} e^{- 2 \pi i \sa x,z\da} e^{i \frac{\cot t |4 \pi \sin t\ z|^2}{4}} e^{- \frac{|4 \pi \sin t\ z|^2}{4}} \vf(4 \pi \sin t\ z) dz.
\end{align*}
Keeping \eqref{psi} in mind, 
we thus obtain from the above integral  
\begin{align*}
& \int_{\Rm} e^{- i \frac{\sa x,y\da}{2 \sin t}} e^{i \frac{e^{it}|y|^2 + e^{-it}|x|^2}{4 \sin t}} \vf(y) dy = (4 \pi \sin t)^m e^{\frac{|x|^2}{4}} e^{i \frac{\cot t |x|^2}{4}} \mathscr F\left(\delta_{4 \pi \sin t}\ e^{i \frac{\cot t |\cdot|^2}4} \psi\right)(x)
\\
& = e^{\frac{|x|^2}{4}} e^{i \frac{\cot t |x|^2}{4}} \mathscr F\left(e^{i \frac{\cot t |\cdot|^2}4} \psi\right)(\frac{x}{4 \pi \sin t}),
\end{align*}
where we have denoted by $\delta_\la f(x) = f(\la x)$ the action of the dilation operator on a function $f$.
Going back to \eqref{OUim4}, we have finally established \eqref{final}.

\end{proof}

Note that \eqref{final} shows that if $\vf\in \mathscr K(\Rm)$, then $f(\cdot,t)\in \mathscr K(\Rm)$,  and also
\[
||f(\cdot,t)||_{L^2(\Rm,d\gamma)} = ||\vf||_{L^2(\Rm,d\gamma)}.
\]

The equation \eqref{final} unveils the intertwining between the group $e^{i t \mathscr L}$ and the Fourier transform. 
We now use Proposition \ref{P:semigroup} to provide the 

\begin{proof}[Proof of Proposition \ref{P:disp}]
We first rewrite \eqref{final} in the following fashion
\begin{equation}\label{sfinalicchio}
\mathscr F\left(e^{i \frac{\cot t |\cdot|^2}4} \psi\right)(\frac{x}{4 \pi \sin t}) = (4\pi
)^{\frac{m}{2}}  \frac{e^{\frac{i\pi m}{4}}}{e^{\frac{imt}2}} (\sin t)^{\frac m2} e^{-i \frac{\cot t |x|^2}{4}} e^{- \frac{|x|^2}{4}} f(x,t).
\end{equation}
The identity \eqref{sfinalicchio} has the following direct consequence 
\begin{equation}\label{sfinalicchietto}
\left|\mathscr F\left(e^{i \frac{\cot t |\cdot|^2}4} \psi\right)(\frac{x}{4 \pi \sin t})\right| = (4\pi
)^{\frac{m}{2}} |\sin t|^{\frac m2} e^{- \frac{|x|^2}{4}} |f(x,t)|.
\end{equation}
If now $1\le p \le 2$, recall that in his celebrated paper \cite{Be} Beckner computed the sharp constant in the Hausdorff-Young inequality, and proved that for any $\psi\in L^p(\Rm)$ one has
\begin{equation}\label{HY}
\left(\int_{\Rm}|\mathscr F \psi(y)|^{p'} dy\right)^{\frac{1}{p'}} \le \left(\frac{p^{1/p}}{{p'}^{1/p'}}\right)^{\frac m2} \left(\int_{\Rm}|\psi(y)|^p dy\right)^{\frac 1p}.
\end{equation}
He also showed that equality is attained in \eqref{HY} if and only if $\psi$ is a Gaussian. The change of variable $y = \frac{x}{4 \pi \sin t}$ in \eqref{HY} gives
\begin{equation*}
\left(\int_{\Rm}|\mathscr F \psi(\frac{x}{4 \pi \sin t})|^{p'} dy\right)^{\frac{1}{p'}} \le \left(\frac{p^{1/p}}{{p'}^{1/p'}}\right)^{\frac m2} (4\pi |\sin t|)^{\frac{m}{p'}}\left(\int_{\Rm}|\psi(y)|^p dy\right)^{\frac 1p}.
\end{equation*}
We then turn to the proof of \eqref{disp}. Let $\vf\in \mathscr K(\Rm)$, and $f(x,t) = e^{it\mathscr L}\vf(x)$ with $t \in J^+$. We have from \eqref{sfinalicchietto} 
\begin{align*}
& \left(\int_{\Rm}|e^{- \frac{|x|^2}{4}} f(x,t)|^{p'} dy\right)^{\frac{1}{p'}} = (4\pi
)^{-\frac{m}{2}} |\sin t|^{-\frac m2} \left(\int_{\Rm} \left|\mathscr F\left(e^{i \frac{\cot t |\cdot|^2}4} \psi\right)(\frac{x}{4 \pi \sin t})\right|^{p'} dx\right)^{\frac{1}{p'}} 
\\
& \le (4\pi
)^{-\frac{m}{2}} |\sin t|^{-\frac m2} \left(\frac{p^{1/p}}{{p'}^{1/p'}}\right)^{\frac m2} (4\pi |\sin t|)^{\frac{m}{p'}}\left(\int_{\Rm} |\psi(x)|^{p} dx\right)^{\frac{1}{p}}. 
\end{align*}
Keeping in mind that $\psi(x) = e^{- \frac{|x|^2}{4}} \vf(x)$, see \eqref{psi}, we reach the desired conclusion \eqref{disp} from the latter inequality.

\end{proof}

We will  return to more general dispersive estimates for the group $e^{it\mathscr L}$ in a future study. 
 


In the closing of this section we exploit Proposition \ref{P:semigroup} to prove Proposition \ref{P:main}.
We will need the following result, see \cite[Theorem 1.1]{CEKPV}. We note for the reader that our normalisation of the Fourier transform
\begin{equation}\label{ft}
\hat \vf(\xi) = \mathscr F(\vf)(\xi) = \int_{\Rm} e^{-2\pi i\sa \xi,x\da} \vf(x) dx,
\end{equation}
differs from theirs, and this accounts for the different constants in \eqref{hardyL2} below.

\begin{theorem}\label{T:har2}
Assume that $h:\Rm\to \R$ be a measurable function that satisfies 
\begin{equation}\label{hardyL2}
||e^{a|\cdot|^2} h||_{L^2(\Rm)} + ||e^{b|\cdot|^2} \hat h||_{L^2(\Rm)}<\infty.
\end{equation}
If $a b\ge \pi^2$, then $h\equiv 0$.  
\end{theorem}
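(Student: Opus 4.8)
The natural route is the classical complex-analytic proof of Hardy's principle, adapted to the $L^2$ formulation. Since $e^{b|\cdot|^2}\hat h\in L^2(\Rm)$, Cauchy--Schwarz gives $\int_{\Rm}|\hat h(\xi)|e^{2\pi|y||\xi|}\,d\xi<\infty$ for every $y$, so $h(z)=\int_{\Rm}e^{2\pi i\langle z,\xi\rangle}\hat h(\xi)\,d\xi$ defines an entire extension of $h$ to $\C^m$. Completing the square in the same estimate yields the global growth bound $|h(x+iy)|\le Ce^{c|y|^2}$, uniformly in $x$, with $c=\pi^2/b$; note that the hypothesis $ab\ge\pi^2$ is precisely the inequality $c\le a$. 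With the extension and the growth bound in hand, I would reduce to $m=1$ by restricting $h$ to complex lines $\zeta\mapsto h(\zeta\omega)$, $\omega\in\mathbb S^{m-1}$: the growth bound restricts cleanly, and writing $\int_{\Rm}e^{2a|x|^2}|h(x)|^2\,dx<\infty$ in polar coordinates shows the real-axis weighted $L^2$ control survives for a.e.\ direction, so the one-dimensional vanishing statement forces $h\equiv0$ on a.e.\ line, hence everywhere. The real work is thus one-dimensional.

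Assume $m=1$. I would introduce the auxiliary entire function $G(z)=e^{cz^2}h(z)$, for which three facts hold: (i) on the imaginary axis $|G(iy)|=e^{-cy^2}|h(iy)|\le C$, so $G$ is bounded there; (ii) on the real axis $e^{cx^2}h=e^{(c-a)x^2}(e^{ax^2}h)$ is a bounded function (since $c\le a$) times an $L^2$ function, so $G|_{\R}\in L^2(\R)$, with genuine Gaussian decay when $ab>\pi^2$; (iii) globally $|G(x+iy)|=e^{c(x^2-y^2)}|h(x+iy)|\le Ce^{cx^2}$, so $G$ has order $2$. The plan is then a Phragm\'en--Lindel\"of argument in each of the four quarter-planes to upgrade this boundary control to global boundedness of $G$; Liouville then forces $G$ constant, and the $L^2$ decay on $\R$ forces that constant to be $0$, so $h\equiv0$.

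The main obstacle is exactly this last step, and for two linked reasons. First, $G$ has order precisely $2$ while the sectors have opening $\pi/2$, which is the critical case of Phragm\'en--Lindel\"of where the naive statement fails; one must insert a regularizing factor $e^{-\varepsilon z^2}$, rotated so that it decays in the given quadrant, and let $\varepsilon\downarrow0$. Second, and more seriously in the $L^2$ setting, the boundary datum on the real axis is only an $L^2$ bound, not a pointwise one. Converting it into the pointwise control needed for the maximum principle cannot be done by a crude sub-mean-value estimate, since the $e^{cx^2}$ growth dominates the averaging; one must instead exploit the sharp Plancherel identity $\|h(\cdot+iy)\|_{L^2(\R)}\le Ce^{cy^2}$ on horizontal lines (itself a consequence of the $\hat h$ hypothesis) to control $G$ on thin strips and thereby feed usable data into the maximum principle.

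I would close with a remark that, alternatively, following Escauriaza--Kenig--Ponce--Vega (the method underlying the cited \cite{CEKPV}), one can bypass complex analysis entirely and prove the $L^2$ statement dynamically: one reads $e^{a|\cdot|^2}h$ and $e^{b|\cdot|^2}\hat h$ as Gaussian decay, at two times, of the free Schr\"odinger evolution $e^{it\Delta}h$, and invokes a logarithmic-convexity (Carleman-type) estimate. This route sidesteps the borderline Phragm\'en--Lindel\"of difficulty altogether and is in fact the one best aligned with the spirit of the present paper.
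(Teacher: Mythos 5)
First, a structural remark: the paper does not actually prove Theorem \ref{T:har2}. It is imported verbatim from \cite[Theorem 1.1]{CEKPV}, with only an adjustment for the normalisation \eqref{ft} of the Fourier transform (this is why the threshold reads $ab\ge \pi^2$ here). The proof in \cite{CEKPV} is a real-variable one, based on logarithmic convexity of Gaussian-weighted norms along the free Schr\"odinger evolution. So the route you relegate to a closing remark is precisely the one the paper relies on, while the complex-analytic plan that occupies the body of your proposal is a different route --- and, as written, an incomplete one.

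The preparatory steps of your plan are sound: the entire extension of $h$ from $e^{b|\cdot|^2}\hat h\in L^2(\Rm)$, the growth bound $|h(x+iy)|\le Ce^{c|y|^2}$ with $c=\pi^2/b$ by completing the square, the equivalence of $ab\ge\pi^2$ with $c\le a$, and the reduction to $m=1$ via polar coordinates and restriction to a.e.\ complex line all check out. But the heart of the argument --- upgrading boundedness of $G(z)=e^{cz^2}h(z)$ on $i\R$ together with mere $L^2$ control on $\R$ to global boundedness --- is exactly where you stop, as you yourself concede. Neither of the two obstacles you name is resolved. For (a), the Phragm\'en--Lindel\"of situation is genuinely critical: an order-$2$ function in a quadrant of opening $\pi/2$ can be bounded on both boundary rays yet unbounded inside (e.g.\ $e^{-iz^2}$, with $|e^{-iz^2}|=e^{2xy}$, in the first quadrant), and your rotated regularizer $e^{i\varepsilon z^2}$ only contributes decay $e^{-2\varepsilon xy}$, which does not dominate the $e^{cx^2}$ growth near the real boundary ray, so the naive $\varepsilon\downarrow 0$ scheme does not close. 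For (b), the suggested remedy --- the Plancherel bound $\|h(\cdot+iy)\|_{L^2(\R)}\le Ce^{cy^2}$ on horizontal lines, ``thin strips'', and ``feeding usable data into the maximum principle'' --- is a gesture rather than an argument: no pointwise bound on the real axis is produced, and mean-value averaging only moves you off the line, where nothing better than the already-known $e^{c|y|^2}$ bound is available. The difficulty is not cosmetic: at the endpoint $ab=\pi^2$ even the Beurling--H\"ormander route degenerates (the relevant Gaussian integral diverges along the critical line), which is exactly why the endpoint $L^2$ statement required the new methods of \cite{CEKPV}. In short, your final remark correctly identifies the proof the paper invokes, but the complex-analytic proposal has a genuine gap at its central step.
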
 

We are ready to give the 

\begin{proof}[Proof of Proposition \ref{P:main}]
Let $\vf\in L^2(\Rm,d\gamma)$ and $f(x,t) = e^{it\mathscr L}\vf(x)$. Let $a, b>0$ be as in the statement of the theorem, so that $\sin s\not=0$. With $\psi$ as in \eqref{psi}, consider the function defined by
\begin{equation}\label{ht}
h_s(x) = e^{i \frac{\cot s |x|^2}4} \psi(x).
\end{equation}
We have
\begin{align}\label{one}
\int_{\Rm} e^{2a |x|^2} |h_s(x)|^2 dx & = \int_{\Rm} e^{2a |x|^2} |\psi(x)|^2 dx = \int_{\Rm} e^{2a |x|^2} |\vf(x)|^2 e^{- \frac{|x|^2}{2}}dx
\\
& =  ||e^{a |x|^2} f(\cdot,0)||^2_{L^2(\Rm,d\gamma)} < \infty,
\notag
\end{align}
in view of \eqref{L2}. On the other hand, \eqref{final} in Proposition \ref{P:semigroup} gives  
\begin{equation*}
\left|\hat h_t\left(\frac{x}{4 \pi \sin t}\right)\right| = (4\pi
)^{\frac{m}{2}} (\sin t)^{\frac m2} e^{- \frac{|x|^2}{4}} |f(x,t)|,
\end{equation*}
and therefore for every $t\in J^+$ we have
\begin{align}\label{beauty}
& \left(\int_{\Rm} e^{2b |x|^2} \left|\hat h_t\left(\frac{x}{4 \pi \sin t}\right)\right|^2 dx\right)^{1/2} 
 = (4\pi
)^{\frac{m}{2}} (\sin t)^{\frac m2} \left(\int_{\Rm} e^{2b |x|^2} |f(x,t)|^2 e^{- \frac{|x|^2}{2}} dx\right)^{1/2}
\\
& = (4\pi
)^{\frac{m}{2}} (\sin t)^{\frac m2} ||e^{b |x|^2} f(\cdot,t)||_{L^2(\Rm,d\gamma)}.
\notag
\end{align}
If $s\in J^+$ is such that $||e^{b |x|^2} f(\cdot,s)||_{L^2(\Rm,d\gamma)} < \infty$, see \eqref{L2}, then we infer from \eqref{beauty} that 
\begin{equation}\label{nik}
\int_{\Rm} e^{2(16 b \pi^2 \sin^2 s) |y|^2} |\hat h_s(y)|^2 dy < \infty.
\end{equation}
In view of \eqref{one} and \eqref{nik}, applying Theorem \ref{T:har2} to the function $h_s$ we conclude that, if 
\[
16 \pi^2 a b  \sin^2 s \ge \pi^2\ \Longleftrightarrow\  a b \sin^2 s  \ge \frac{1}{16},
\]
then $h_s(x) = 0$ for every $x\in \Rm$. From \eqref{ht}, it is clear that this implies $\psi \equiv 0$, and therefore $\phi \equiv 0$, in $\Rm$.

\end{proof}


\section{Appendix: The imaginary harmonic oscillator}\label{S:app}

As said at the end of the introduction, for the benefit of the reader unfamiliar with the link between the Ornstein-Uhlenbeck operator and the quantum mechanics harmonic oscillator, in this brief appendix we show how to pass from one PDE to the other, and back. The main tool is the nonlinear Schr\"odinger equation of Riccati type \eqref{riccati} in Lemma \ref{L:ouho}. Consider the harmonic oscillator \footnote{This operator is usually defined as $H = \Delta - |x|^2$. We are using the $1/4$ normalisation in order not to have to change in $\Delta - 2 \sa v,\nabla\da$ that of the Ornstein-Uhlenbeck operator in the statement of Proposition \ref{P:connect} below.}  in $\Rm$
\begin{equation}\label{H}
H = \Delta - \frac{|x|^2}4
\end{equation}
and the Cauchy problem in $\Rm\times (0,\infty)$ for the associated Schr\"odinger operator
\begin{equation}\label{cpHO}
\p_t u - i H u = 0,\ \ \ \ \ \ \ \ u(x,0) = u_0(x),
\end{equation}
where the initial datum $u_0$ will be taken e.g. in $\mathscr S(\Rm)$. 
The following lemma establishes a general principle, one interesting consequence of which is that it allows to connect \eqref{cpHO} to the problem \eqref{cp0}, and in fact show that they are equivalent. The functions $\Phi$ and $h$ in its statement are assumed complex-valued.

\begin{lemma}\label{L:ouho}
Let $\Phi\in C(\R^{m+1})$ and $h\in C^2(\R^{m+1})$ be connected by the following  nonlinear Schr\"odinger equation
\begin{equation}\label{riccati}
i h_t + \Delta h - |\nabla h|^2  = \Phi.
\end{equation}
Then $u$ solves the partial differential equation
\begin{equation}\label{pdeho}
P u = i(\Delta u + \Phi u) - u_t = 0
\end{equation}
if and only if $f$ defined by the transformation
\begin{equation}\label{genexp}
u(x,t) = e^{-h(x,t)} f(x,t),
\end{equation} 
solves the equation
\begin{equation}\label{PDEou}
i(\Delta f - 2 \langle\nabla h,\nabla f\rangle) - f_t = 0.
\end{equation}
\end{lemma}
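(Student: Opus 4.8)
The plan is to substitute the transformation \eqref{genexp} directly into the operator $P$ of \eqref{pdeho} and to verify that the result factors as $e^{-h}$ times the left-hand side of \eqref{PDEou}, with every zeroth-order term in $f$ collapsing to zero precisely because of the Riccati relation \eqref{riccati}. Since $e^{-h}$ never vanishes, this factorization yields the asserted equivalence in both directions at once, so no separate ``if'' and ``only if'' arguments are needed.

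First I would compute the derivatives of $u = e^{-h} f$ by the product rule. One has $u_t = e^{-h}(f_t - h_t f)$ and $\nabla u = e^{-h}(\nabla f - f \nabla h)$, and taking one more divergence, using $\nabla\cdot(f\nabla h) = \langle \nabla f,\nabla h\rangle + f\Delta h$, gives
\[
\Delta u = e^{-h}\bigl(\Delta f - 2\langle \nabla h,\nabla f\rangle - f\Delta h + f|\nabla h|^2\bigr).
\]
Inserting these expressions into $Pu = i(\Delta u + \Phi u) - u_t$ and factoring out the common $e^{-h}$, I would then sort the terms into two groups: those carrying derivatives of $f$, namely $i\bigl(\Delta f - 2\langle \nabla h,\nabla f\rangle\bigr) - f_t$, which is exactly the left-hand side of \eqref{PDEou}; and the remaining terms, all proportional to $f$, whose coefficient is
\[
i\bigl(-\Delta h + |\nabla h|^2 + \Phi\bigr) + h_t.
\]

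The key step is to observe that this coefficient vanishes identically by virtue of \eqref{riccati}: rewriting that equation as $h_t = -i\Phi + i\Delta h - i|\nabla h|^2$ and substituting makes every term cancel. Consequently $Pu = e^{-h}\bigl[i(\Delta f - 2\langle \nabla h,\nabla f\rangle) - f_t\bigr]$, and since $e^{-h}\neq 0$ the equations \eqref{pdeho} and \eqref{PDEou} are equivalent. I do not expect a genuine obstacle; the computation is routine, and the only point demanding care is the bookkeeping of the first-order term $-2\langle \nabla h,\nabla f\rangle$, which collects one contribution from differentiating $e^{-h}$ against $\nabla f$ and a second from the expansion of $\nabla\cdot(f\nabla h)$. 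The conceptual content is simply that the Riccati equation \eqref{riccati} is exactly the condition that annihilates the zeroth-order potential in the transformed operator.
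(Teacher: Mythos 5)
Your proposal is correct and follows essentially the same route as the paper: expand $\Delta(e^{-h}f)$ by the product rule, factor out $e^{-h}$, and observe that the zeroth-order coefficient $i(\Phi - \Delta h + |\nabla h|^2) + h_t$ vanishes identically by the Riccati equation \eqref{riccati}, so that $Pu = e^{-h}\bigl[i(\Delta f - 2\langle\nabla h,\nabla f\rangle) - f_t\bigr]$ and the nonvanishing of $e^{-h}$ gives both directions at once. The bookkeeping of the cross term $-2\langle\nabla h,\nabla f\rangle$ and the cancellation you describe match the paper's computation exactly.
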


\begin{proof}
With $u$ as in \eqref{genexp}, we find
\begin{align*}
P u & = i \Delta(e^{-h} f) + i \Phi e^{-h} f - (e^{-h} f)_t
\\
& = i f \Delta(e^{-h}) + i e^{-h} \Delta f + 2 i \langle\nabla(e^{-h}),\nabla f\rangle + i \Phi e^{-h} f
 - (e^{-h})_t f -  e^{-h} f_t
\\
& = i e^{-h} f |\nabla h|^2 - i e^{-h} f \Delta h + i e^{-h} \Delta f - 2 i e^{-h} \langle\nabla h,\nabla f\rangle + i \Phi e^{-h} f
 - (e^{-h})_t f -  e^{-h} f_t
 \\
 & = e^{-h}\left\{i \left[\Phi - i h_t - (\Delta h - |\nabla h|^2 )\right] f + i \Delta f - 2 i \langle\nabla h,\nabla f\rangle - f_t\right\}.
\end{align*}
This computation proves that if $h$ and $\Phi$ solve \eqref{riccati}, then $u$ solves \eqref{pdeho} if and only if $f$ is a solution of \eqref{PDEou}.

\end{proof}

With Lemma \ref{L:ouho} in hand, we now return to \eqref{H} and prove the following result.
\begin{proposition}\label{P:connect}
A function $u$ solves the Cauchy problem \eqref{cpHO} if and only if the function
\begin{equation}\label{fu}
f(x,t) =  u(x,t)\ e^{\frac{|x|^2}4 + i \frac m2 t}
\end{equation}
solves \eqref{cp0} with $f(x,0) = \vf(x) = u_0(x)\ e^{\frac{|x|^2}4}$.
\end{proposition}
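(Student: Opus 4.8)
The plan is to deduce Proposition \ref{P:connect} directly from Lemma \ref{L:ouho} by making the one non-obvious choice — the function $h$ — and then letting the lemma do the rest. Matching the prescribed transformation \eqref{fu} against the general substitution $u = e^{-h}f$ of \eqref{genexp}, I am forced to take
\[
h(x,t) = \frac{|x|^2}{4} + i \frac m2\, t.
\]
First I would record the elementary derivatives of this $h$, namely $\nabla h = \tfrac{x}{2}$, so that $|\nabla h|^2 = \tfrac{|x|^2}{4}$, together with $\Delta h = \tfrac m2$ and $h_t = i\tfrac m2$.

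Next I would substitute these into the rule for $\Phi$ dictated by the Riccati equation \eqref{riccati}, obtaining
\[
\Phi = i h_t + \Delta h - |\nabla h|^2 = -\frac m2 + \frac m2 - \frac{|x|^2}{4} = -\frac{|x|^2}{4}.
\]
The role of the purely time-dependent summand $i\frac m2 t$ in $h$ is exactly to produce $ih_t = -\tfrac m2$, which cancels $\Delta h = \tfrac m2$ and leaves $\Phi = -\tfrac{|x|^2}{4}$ independent of $t$. With this $\Phi$, equation \eqref{pdeho} becomes $i\big(\Delta u - \tfrac{|x|^2}{4}u\big) - u_t = iHu - u_t = 0$, which is the PDE in \eqref{cpHO}; and since $2\langle\nabla h,\nabla f\rangle = \langle x,\nabla f\rangle$, equation \eqref{PDEou} becomes $i(\Delta f - \langle x,\nabla f\rangle) - f_t = 0$, which is the PDE in \eqref{cp0}.

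Having checked that the pair $(h,\Phi)$ solves \eqref{riccati}, I would invoke Lemma \ref{L:ouho} to conclude that $u$ solves the PDE in \eqref{cpHO} if and only if $f = e^{-h}u = u\,e^{\frac{|x|^2}{4} + i\frac m2 t}$ solves the PDE in \eqref{cp0}. The only remaining step is to align the Cauchy data: evaluating \eqref{fu} at $t=0$ gives $f(x,0) = u(x,0)\,e^{\frac{|x|^2}{4}} = u_0(x)\,e^{\frac{|x|^2}{4}}$, so $\vf = u_0\,e^{\frac{|x|^2}{4}}$ as claimed. I do not expect any real obstacle: the whole argument reduces to the correct guess for $h$, and the only delicate point is ensuring that the $ih_t$ and $\Delta h$ contributions cancel, so that $\Phi$ is precisely the harmonic-oscillator potential $-\tfrac{|x|^2}{4}$ with no residual time dependence.
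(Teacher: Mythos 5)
Your proposal is correct and follows essentially the same route as the paper: both rest on Lemma \ref{L:ouho} with the choice $h(x,t)=\frac{|x|^2}{4}+i\frac{m}{2}t$ and $\Phi=-\frac{|x|^2}{4}$. The only cosmetic difference is the direction of the computation --- the paper starts from $\Phi=-\frac{|x|^2}{4}$ and solves the Riccati equation \eqref{riccati} via the ansatz $h=A|x|^2+Bt$ to find $A=\frac14$, $B=i\frac{m}{2}$, whereas you read $h$ off the prescribed transformation \eqref{fu} and verify that \eqref{riccati} returns the harmonic-oscillator potential; both verifications are identical in substance.
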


\begin{proof}
It is clear from \eqref{pdeho} that, in order to obtain from it the PDE in \eqref{cpHO}, we need  $\Phi(x,t) = - \frac{|x|^2}4$. With this choice, we look for a function $h(x,t)$ that is connected to such $\Phi$ by the equation \eqref{riccati}. A natural ansatz is $h(x,t) = A |x|^2 + B t$, with $A, B\in \mathbb C$ to be determined. Since $\Delta h = 2m A$, $h_t = B$ and $|\nabla h|^2 = 4 A^2 |x|^2$, to satisfy \eqref{riccati} we want
\[
i B + 2m A - 4 A^2 |x|^2 = - \frac{|x|^2}4,
\]
which holds iff $A = \frac{1}4, B =  i \frac m2$, and thus
\begin{equation}\label{h}
h(x,t) = \frac{|x|^2}4 + i \frac m2 t.
\end{equation}
With such choice of $h(x,t)$, the equation \eqref{genexp} in Lemma \ref{L:ouho} shows that $f(x,t)$ defined in \eqref{fu} solves the Cauchy problem \eqref{cp0}, with $f(x,0) = \vf(x) = u_0(x)\ e^{\frac{|x|^2}4}$. The ``if and only if" character of the statement is obvious.

\end{proof}

If $u_0\in \So$, then it is clear that $e^{\frac{|\cdot|^2}4} u_0\in \mathscr K(\Rm)$. According to Proposition \ref{P:connect}, we can express the group $e^{itH}$ by the formula
\begin{equation}\label{eitH}
e^{itH} u_0(x) = e^{-\frac{|x|^2}4 - i \frac m2 t} e^{it\mathscr L}(e^{\frac{|\cdot|^2}4} u_0)(x).
\end{equation}
Applying \eqref{OUim}, we infer from \eqref{eitH}.

\begin{corollary}\label{C:HOim}
Given $u_0\in \So$, for $t\in J^+$ one has
\begin{equation}\label{eitHg}
e^{itH} u_0(x) =  \frac{(4\pi
)^{-\frac{m}{2}} e^{-\frac{|x|^2}4}}{e^{\frac{i\pi m}{4}} (\sin t)^{\frac m2}} \int_{\Rm} e^{i \frac{|e^{it/2} y-e^{-it/2} x|^2}{4 \sin t}} e^{\frac{|y|^2}4} u_0(y) dy.
\end{equation}
\end{corollary}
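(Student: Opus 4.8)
The plan is to read off \eqref{eitHg} directly from the two ingredients already in place: the gauge identity \eqref{eitH}, which rewrites the harmonic-oscillator group $e^{itH}$ in terms of the Ornstein-Uhlenbeck group $e^{it\mathscr L}$, and the explicit kernel \eqref{OUim} of Proposition \ref{P:OUim}. The argument is thus a substitution of one formula into the other, followed by cancellation of the time-dependent phase factors.

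First I would check that the datum to which \eqref{OUim} is applied has the required regularity: since $u_0\in \So$, the product $\vf := e^{\frac{|\cdot|^2}4} u_0$ lies in $\mathscr K(\Rm)$, as remarked just before the statement, so Proposition \ref{P:OUim} is applicable with this $\vf$. I would then start from \eqref{eitH},
\[
e^{itH} u_0(x) = e^{-\frac{|x|^2}4 - i \frac m2 t}\, e^{it\mathscr L}\big(e^{\frac{|\cdot|^2}4} u_0\big)(x),
\]
and replace the factor $e^{it\mathscr L}\vf(x)$, for $t\in J^+$, by its representation \eqref{OUim} evaluated at $\vf(y) = e^{\frac{|y|^2}4} u_0(y)$.

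The only simplification is in the constants. The prefactor produced by the gauge transformation \eqref{fu} contains $e^{-i\frac m2 t}$, while the kernel \eqref{OUim} carries the factor $e^{\frac{imt}2}$; their product is $e^{-i\frac m2 t}\cdot e^{\frac{imt}2} = 1$, so the oscillatory time prefactors cancel identically. What survives is the spatial weight $e^{-\frac{|x|^2}4}$ together with the constant $\frac{(4\pi)^{-m/2}}{e^{\frac{i\pi m}{4}} (\sin t)^{\frac m2}}$ and the Gaussian kernel with the datum $e^{\frac{|y|^2}4} u_0(y)$, which is precisely the right-hand side of \eqref{eitHg}.

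There is no genuine analytic obstacle here, as both \eqref{eitH} and \eqref{OUim} are exact identities valid on all of $J^+$; the one point worth double-checking is the phase bookkeeping described above, namely that the factor $e^{\frac{imt}2}$ of the Ornstein-Uhlenbeck kernel exactly annihilates the factor $e^{-i\frac m2 t}$ coming from the conjugation \eqref{fu}, leaving the clean prefactor displayed in \eqref{eitHg}.
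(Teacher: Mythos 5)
Your argument is correct and coincides with the paper's own (one-line) derivation: the corollary is obtained precisely by substituting the kernel representation \eqref{OUim}, applied to $\vf = e^{\frac{|\cdot|^2}{4}}u_0 \in \mathscr K(\Rm)$, into the conjugation identity \eqref{eitH}, with the phases $e^{\frac{imt}{2}}$ and $e^{-i\frac{m}{2}t}$ cancelling as you note. Nothing further is needed.
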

Although with a different expression, formula \eqref{eitHg} is well-known to workers in harmonic analysis, see for instance \cite[Eq. (4.13), p.85]{Vel1}, or \cite[Eq. (6), p.164]{ST}.
Using \eqref{expa} in \eqref{eitHg}, we thus obtain the following counterpart of Proposition \ref{P:semigroup}.

\begin{corollary}\label{C:semigroup}
Given $u_0\in \mathscr S(\Rm)$, let $u(x,t) = e^{itH}u_0(x)$. Then for every $t\in J^+$ one has
\begin{equation}\label{finalino}
u(x,t) =  \frac{(4\pi
)^{-\frac{m}{2}}}{e^{\frac{i\pi m}{4}} (\sin t)^{\frac m2}} e^{i \frac{\cot t |x|^2}{4}} \mathscr F\left(e^{i \frac{\cot t |\cdot|^2}4} u_0\right)(\frac{x}{4 \pi \sin t}).
\end{equation}
\end{corollary}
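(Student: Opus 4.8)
The plan is to follow verbatim the mechanism of the proof of Proposition \ref{P:semigroup}, this time starting from the explicit representation \eqref{eitHg} of $e^{itH}u_0$ supplied by Corollary \ref{C:HOim} and feeding in the expansion \eqref{expa}. The conceptual point that makes everything work is that the datum transformation $\vf=e^{\frac{|\cdot|^2}4}u_0$ dictated by Proposition \ref{P:connect} exactly cancels the Fock-type weight in \eqref{psi}: here $\psi=e^{-\frac{|\cdot|^2}4}\vf=u_0$. Thus $u_0$ itself plays the role that $\psi$ played in \eqref{final}, and the computation should collapse directly to \eqref{finalino} with no residual Gaussian factors.

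Concretely, I would first substitute \eqref{expa} into \eqref{eitHg}, rewriting the phase as $\frac{e^{it}|y|^2+e^{-it}|x|^2-2\sa x,y\da}{4\sin t}$. The decisive algebraic step is to split the two quadratic exponentials using $e^{\pm it}=\cos t\pm i\sin t$, which gives
\[
e^{i\frac{e^{-it}|x|^2}{4\sin t}}=e^{\frac{|x|^2}4}\,e^{i\frac{\cot t\,|x|^2}4},\qquad e^{i\frac{e^{it}|y|^2}{4\sin t}}=e^{-\frac{|y|^2}4}\,e^{i\frac{\cot t\,|y|^2}4}.
\]
Now the factor $e^{\frac{|x|^2}4}$ annihilates the prefactor $e^{-\frac{|x|^2}4}$ sitting in front of \eqref{eitHg}, while the factor $e^{-\frac{|y|^2}4}$ annihilates the weight $e^{\frac{|y|^2}4}$ multiplying $u_0(y)$ in the integrand. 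After these two cancellations only the oscillatory Gaussian phases remain, and the integral reduces to $\int_{\Rm} e^{-i\frac{\sa x,y\da}{2\sin t}}\,e^{i\frac{\cot t\,|y|^2}4}\,u_0(y)\,dy$. The substitution $y=4\pi\sin t\,z$ then turns the linear phase into the Fourier kernel $e^{-2\pi i\sa x,z\da}$, so the integral becomes $(4\pi\sin t)^m\,\mathscr F\!\big(\delta_{4\pi\sin t}(e^{i\frac{\cot t\,|\cdot|^2}4}u_0)\big)(x)$, and the scaling behaviour of $\mathscr F$ under the dilation $\delta_{4\pi\sin t}$ produces the evaluation at $\frac{x}{4\pi\sin t}$ appearing in \eqref{finalino}.

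There is genuinely no serious obstacle beyond careful bookkeeping of the exponentials; the single thing to watch is the phase $e^{\frac{imt}2}$, which is present in \eqref{OUim} and \eqref{final} but \emph{absent} from \eqref{finalino}, the compensating $e^{i\frac m2 t}$ being carried by the transformation \eqref{fu}. As a consistency check I would also note that the corollary drops out immediately from Proposition \ref{P:connect} combined with \eqref{final}: by \eqref{fu} one has $u(x,t)=e^{-i\frac m2 t}\,[e^{-\frac{|x|^2}4}f(x,t)]$, and inserting \eqref{final} with $\psi=u_0$ cancels the $e^{\frac{imt}2}$ there against this $e^{-i\frac m2 t}$, yielding \eqref{finalino} at once.
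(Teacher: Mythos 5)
Your proposal is correct and follows exactly the route the paper intends: substitute the expansion \eqref{expa} into \eqref{eitHg} and repeat the computation from the proof of Proposition \ref{P:semigroup}, with the weights $e^{\pm|\cdot|^2/4}$ cancelling because $\psi=e^{-\frac{|\cdot|^2}4}\vf=u_0$. Your closing consistency check (combining \eqref{fu} with \eqref{final} and cancelling $e^{\frac{imt}2}$ against $e^{-i\frac m2 t}$) is the same derivation seen from the other end, and all the phase bookkeeping is accurate.
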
 


\vskip 0.2in

\section{Declarations}

\noindent \textbf{Data availability statement:} This manuscript has no associated data.

\vskip 0.2in

\noindent \textbf{Funding and/or Conflicts of interests/Competing interests statement:} The author declares that he does not have any conflict of interest for this work

\bibliographystyle{amsplain}

\begin{thebibliography}{10}

\bibitem{BaBaGa}
H. Bahouri, D. Barilari \& I. Gallagher, \emph{Strichartz estimates and Fourier restriction theorems on the Heisenberg group}.
J. Fourier Anal. Appl. 27~(2021), no. 2, Paper No. 21, 41 pp.

\bibitem{BaFeGa}
H. Bahouri, C. Fermanian-Kammerer \& I. Gallagher, \emph{Dispersive estimates for the Schr\"odinger operator on step-$2$ stratified Lie groups}.
Anal. PDE 9~(2016), no. 3, 545-574.

\bibitem{BaGa}
H. Bahouri \& I. Gallagher, \emph{Local dispersive and Strichartz estimates for the Schr\"odinger operator on the Heisenberg group}.
Commun. Math. Res. 39~(2023), no. 1, 1-35.

\bibitem{BaPaXu}
H. Bahouri. P. G\'erard \& Chao-Jiang Xu, \emph{Espaces de Besov et estimations de Strichartz g\'en\'eralis\'ees sur le groupe de Heisenberg
Besov spaces and generalized Strichartz estimates on the Heisenberg group}.
J. Anal. Math. 82~(2000), 93-118.

\bibitem{Be}
W. Beckner, \emph{Inequalities in Fourier analysis}. Ann. of Math. (2) 102~(1975), no. 1, 159-182. 



\bibitem{Vel}
S. Ben Sa\"id, S. Thangavelu \& V. N. Dogga, \emph{
Uniqueness of solutions to Schr\"odinger equations on H-type groups}.
J. Aust. Math. Soc. 95~(2013), no. 3, 297-314.


\bibitem{BGV}
N. Berline, E. Getzler \& M. Vergne, \emph{Heat kernels and Dirac operators}. Grundlehren der Mathematischen Wissenschaften [Fundamental Principles of Mathematical Sciences], 298. Springer-Verlag, Berlin, 1992.

\bibitem{Bo}
V. I. Bogachev, \emph{Gaussian measures}.
Math. Surveys Monogr., 62
American Mathematical Society, Providence, RI, 1998, xii+433 pp.

\bibitem{Bo2} 
V. I. Bogachev, \emph{Ornstein-Uhlenbeck operators and semigroups}.
Uspekhi Mat. Nauk 73 (2018), no. 2, 3-74.
Russian Math. Surveys 73 (2018), no. 2, 191-260.

\bibitem{Bri}
H.C. Brinkman, \emph{Brownian motion in a field of force and the diffusion theory of chemical reactions. II}. Physica \textbf{23}~(1956), 149- 155.


\bibitem{BG&liar}
F. Buseghin, N. Garofalo \& G. Tralli, \emph{On the limiting behaviour of some nonlocal seminorms: a new phenomenon}.
Ann. Sc. Norm. Super. Pisa Cl. Sci. (5) 23 (2022), no. 2, 837-875.


\bibitem{CF}
B. Cassano \& L. Fanelli, \emph{Gaussian decay of harmonic oscillators and related models}.
J. Math. Anal. Appl. 456~(2017), no. 1, 214-228.


\bibitem{Caze}
 T. Cazenave, \emph{Semilinear Schr\"odinger equations}. Courant Lect. Notes Math., 10
New York University, Courant Institute of Mathematical Sciences, New York; American Mathematical Society, Providence, RI, 2003, xiv+323 pp.

\bibitem{Cha}
S. Chanillo, \emph{Uniqueness of solutions to Schr\"odinger equations on complex semi-simple Lie groups}.
Proc. Indian Acad. Sci. Math. Sci. 117 (2007), no. 3, 325-331.

\bibitem{CEKPV}
M. Cowling, L. Escauriaza, C. E. Kenig, G. Ponce \& L. Vega, \emph{The Hardy uncertainty principle revisited}.
Indiana Univ. Math. J. 59 (2010), no. 6, 2007-2025.

\bibitem{CP}
M. Cowling \& J.F. Price, \emph{Generalisations of Heisenberg's inequality}. Harmonic analysis (Cortona, 1982), 443-449.
Lecture Notes in Math., 992
Springer-Verlag, Berlin, 1983

\bibitem{DZ}
G. Da Prato and J. Zabczyk, \emph{Ergodicity for infinite-dimensional systems}, London Mathematical Society Lecture Note Series \textbf{229}~(1996), Cambridge University Press, Cambridge.

\bibitem{Hi}
M. Del Hierro, \emph{Dispersive and Strichartz estimates on $H$-type groups}.
Studia Math. 169~(2005), no. 1, 1-20.

\bibitem{EKPVcpde}
L. Escauriaza, C. E. Kenig, G. Ponce \& L. Vega, \emph{On uniqueness properties of solutions of Schr\"odinger equations}.
Comm. Partial Differential Equations 31 (2006), no. 10-12, 1811-1823.


\bibitem{EKPVjems}
L. Escauriaza, C. E. Kenig, G. Ponce \& L. Vega, \emph{Hardy's uncertainty principle, convexity and Schr\"odinger evolutions}.
J. Eur. Math. Soc. (JEMS) 10~(2008), no. 4, 883-907.

\bibitem{EKPVduke}
L. Escauriaza, C. E. Kenig, G. Ponce \& L. Vega, \emph{The sharp Hardy uncertainty principle for Schr\"odinger evolutions}.
Duke Math. J. 155~(2010), no. 1, 163-187.

\bibitem{EKPVjlms}
L. Escauriaza, C. E. Kenig, G. Ponce \& L. Vega, \emph{ Uncertainty principle of Morgan type and Schr\"odinger evolutions}.
J. Lond. Math. Soc. (2) 83 (2011), no. 1, 187-207.

\bibitem{FL}
B. Farkas \& A. Lunardi, \emph{Maximal regularity for Kolmogorov operators in $L^2$ spaces with respect to invariant measures}. J. Math. Pures Appl. 86~(2006), 310-321.

\bibitem{FM}
A. Fern\'andez-Bertolin \& E. Malinnikova, 
\emph{Dynamical versions of Hardy's uncertainty principle: a survey}. (English summary)
Bull. Amer. Math. Soc. (N.S.) 58~(2021), no.3, 357-375.


\bibitem{Fo}
G. B. Folland, \emph{Harmonic analysis in phase space}.
Ann. of Math. Stud., 122
Princeton University Press, Princeton, NJ, 1989, x+277 pp.

\bibitem{Fre}
M. Freidlin, \emph{Some remarks on the Smoluchowski-Kramers approximation}. J. Statist. Phys. \textbf{117}~(2004), no. 3-4, 617-634.

\bibitem{GL}
N. Garofalo \& A. Lunardi, \emph{Schr\"odinger semigroups and the H\"ormander hypoellipticity condition}. ArXiv: 5649756



\bibitem{GTmathann}
N. Garofalo \& G. Tralli, \emph{Hardy-Littlewood-Sobolev inequalities for a class of non-symmetric and non-doubling hypoelliptic semigroups}.
Math. Ann. 383~(2022), no. 1-2, 1-38.

\bibitem{GT}
N. Garofalo \& G. Tralli, \emph{Heat kernels for a class of hybrid evolution equations}.
Potential Anal. 59 (2023), no. 2, 823-856.

\bibitem{GV}
J. Ginibre \& G. Velo, \emph{On a class of nonlinear Schr\"odinger equations. I. The Cauchy problem, general case}.
J. Functional Analysis 32 (1979), no. 1, 1-32.

\bibitem{GVstrich}
J. Ginibre \& G. Velo, \emph{The global Cauchy problem for the nonlinear Schr\"odinger equation revisited}.
Ann. Inst. H. Poincar\'e Anal. Non Lin\'eaire 2~(1985), no. 4, 309-327.



\bibitem{Ha}
G. H. Hardy, \emph{A Theorem Concerning Fourier Transforms}.
J. London Math. Soc. 8 (1933), no. 3, 227-231.

\bibitem{Ho}
L. H{\"o}rmander,
\textit{Hypoelliptic second order differential equations}. Acta Math. 119~(1967), 147-171.

\bibitem{Hobook}
L. H{\"o}rmander,
\textit{The analysis of linear partial differential operators. I}.
Classics Math.
Springer-Verlag, Berlin, 2003, x+440 pp.

\bibitem{Kol}
A. N. Kolmogorov,  
\textit{Zuf\"allige Bewegungen (Zur Theorie der Brownschen Bewegung)}. Ann. of Math. (2) 35~(1934), 116--117.




\bibitem{KJO}
A. Kulikov, L. Oliveira \& J. P. G. Ramos, \emph{On Gaussian decay rates of harmonic oscillators and equivalences of related Fourier uncertainty principles}.
Rev. Mat. Iberoam. 40 (2024), no. 2, 481-502.

\bibitem{LP}
E. Lanconelli \& S. Polidoro, \emph{On a class of hypoelliptic evolution operators}.
Rend. Sem. Mat. Univ. Politec. Torino 52 (1994), no. 1, 29-63.

\bibitem{LM}
J. Ludwig \& D. M\"uller, \emph{Uniqueness of solutions to Schr\"odinger equations on $2$-step nilpotent Lie groups}.
Proc. Amer. Math. Soc. 142 (2014), no. 6, 2101-2118.

\bibitem{LMP}
A. Lunardi, G. Metafune \& D. Pallara, \emph{The Ornstein-Uhlenbeck semigroup in finite dimension}.
Philos. Trans. Roy. Soc. A 378 (2020), no. 2185, 20200217, 15 pp.

\bibitem{Mu}
D. M\"uller, 
\emph{A restriction theorem for the Heisenberg group}.
Ann. of Math. (2) 131~(1990), no.3, 567-587.

\bibitem{OU}
L. S. Ornstein \& G. E. Uhlenbeck, \emph{On the theory of the Brownian motion. I}.
Phys. Rev. (2) 36~(1930), 823-841. 

\bibitem{Pauli}
W. Pauli, \emph{Wave Mechanics}. Pauli Lectures on Physics, vol. 5. Dover 2000 edition of the original 1973 work published by The MIT Press, Cambridge, Massachusetts and London, England.  

\bibitem{RR}
D. Radchenko \& J. P. G. Ramos, \emph{Sharp Gaussian decay for the one-dimensional harmonic oscillator}, ArXiv:2305.18546

\bibitem{SS}
M. Sanz-Sol\'e, \emph{Malliavin calculus, with applications to stochastic partial differential equations}.
Fundam. Sci.
EPFL Press, Lausanne; distributed by CRC Press, Boca Raton, FL, 2005, viii+162 pp.

\bibitem{SST}
A. Sitaram, M. Sundari \& S. Thangavelu, \emph{Uncertainty principles on certain Lie groups}.
Proc. Indian Acad. Sci. Math. Sci. 105~(1995), no. 2, 135-151.

\bibitem{ST}
P. Sj\"ogren \& J. L. Torrea, \emph{On the boundary convergence of solutions to the Hermite-Schr\"odinger equation}.
Colloq. Math. 118~(2010), no. 1, 161-174.

\bibitem{Strex}
D. W. Stroock, \emph{An exercise in Malliavin's calculus}.
J. Math. Soc. Japan 67~(2015), no. 4, 1785-1799.

\bibitem{Str}
D. W. Stroock, \emph{Partial differential equations for probabilists}.
Cambridge Stud. Adv. Math., 112
Cambridge University Press, Cambridge, 2008. xvi+215 pp.



\bibitem{Ta1}
S. Takagi, \emph{Quantum dynamics and noninertial frames of reference. I. Generality}.
Progr. Theoret. Phys. 85~(1991), no. 3, 463-479.

\bibitem{Ta2}
S. Takagi, \emph{Quantum dynamics and noninertial frames of reference. II. Harmonic oscillators}.
Progr. Theoret. Phys. 85~(1991), no. 4, 723-742.

\bibitem{Vel1}
S. Thangavelu, \emph{Lectures on Hermite and Laguerre expansions}.
Math. Notes, 42
Princeton University Press, Princeton, NJ, 1993, xviii+195 pp.

\bibitem{Veluma}
S. Thangavelu, \emph{An introduction to the uncertainty principle}.
Progr. Math., 217
Birkh\"auser Boston, Inc., Boston, MA, 2004, xiv+174 pp.


\bibitem{WU}
M. C. Wang \& G. E. Uhlenbeck, \emph{
On the theory of the Brownian motion. II}.
Rev. Modern Phys. 17~(1945), 323-342.


\bibitem{Wi}
G. C. Wick, \emph{Properties of Bethe-Salpeter wave functions}.
Phys. Rev. (2) 96~(1954), 1124-1134.

\bibitem{Yo}
K. Yosida, \emph{Functional analysis}.
Classics Math.
Springer-Verlag, Berlin, 1995, xii+501 pp.

\bibitem{Z}
J. Zabczyk, \emph{Mathematical control theory: an introduction}.
Systems Control Found. Appl.
Birkh\"auser Boston, Inc., Boston, MA, 1992, x+260 pp.



\end{thebibliography}

\end{document}